\titleformat{\subsection}[runin]
{\bfseries} {\thesubsection{.}}{0.15cm}{}[.]
\titleformat{\subsubsection}[runin]
{\em}{\thesubsubsection{.}}{0.15cm}{}[.]
\newtheorem{theorem}{Theorem}[section]
\newtheorem{lemma}[theorem]{Lemma}
\newtheorem{corollary}[theorem]{Corollary}
\theoremstyle{definition}
\newtheorem{definition}[theorem]{Definition}
\newtheorem{conjecture}[theorem]{Conjecture}
\newtheorem{problem}[theorem]{Problem}
\numberwithin{equation}{section}
\numberwithin{figure}{section}
\newcommand\Cscr{\mathscr{C}}
\newcommand\B{\mathbb{B}}
\newcommand\C{\mathbb{C}}
\newcommand\D{\overline{\mathbb D}}
\renewcommand\D{\mathbb D}
\newcommand\N{\mathbb{N}}
\renewcommand\P{\mathbb{P}}
\newcommand\R{\mathbb{R}}
\renewcommand\b{\mathbb{B}}
\renewcommand\c{\mathbb{C}}
\renewcommand\d{\mathbb D}
\renewcommand\r{\mathbb{R}}
\newcommand\dist{\mathrm{dist}}
\def\dist{\mathrm{dist}}
\begin{document}


\fancyhead[LO]{Complete minimal surfaces with Cantor ends}
\fancyhead[RE]{A.\ Alarc\'on}
\fancyhead[RO,LE]{\thepage}

\thispagestyle{empty}


\begin{center}
{\bf\Large Complete minimal surfaces with Cantor ends\\in minimally convex domains
}

\medskip

%
%
{\bf Antonio Alarc\'on}
\end{center}

\medskip

\begin{center}
{\em Dedicated to Nikolai Nadirashvili on the occasion of his seventieth birthday}
\end{center}

%
%
\medskip

\begin{quoting}[leftmargin={7mm}]
{\small
\noindent {\bf Abstract}\hspace*{0.1cm}
We survey the recent history of the conformal Calabi-Yau problem
consisting in determining the complex structures admitted by complete bounded minimal surfaces in $\R^3$. Moreover, we prove that for any minimally convex domain $\Omega$ in $\R^3$ and any compact Riemann surface $R$ there is a Cantor set $C$ in $R$ whose complement $R\setminus C$ is the complex structure of a complete proper minimal surface in $\Omega$.

\noindent{\bf Keywords}\hspace*{0.1cm} 
Minimal surface, Riemann surface, complete surface, minimally convex domain, Calabi-Yau problem


\noindent{\bf Mathematics Subject Classification (2020)}\hspace*{0.1cm} 
53A10, 53C42, 30Fxx
}
\end{quoting}


\section{The conformal Calabi-Yau problem: a brief survey}\label{sec:survey}

\noindent
In 1965, motivated by the well known fact that  there are no compact minimal submanifolds of the Euclidean space $\R^n$ $(n\ge 3)$, Calabi made the following conjectures (see \cite[p.\ 170]{Calabi1965Conjecture}):
%
%
\begin{conjecture}\label{co:Calabi}
\begin{enumerate}[\rm (a)]
\item There is no complete bounded minimal hypersurface in $\R^n$.

\smallskip

\item There is no complete nonflat minimal hypersurface in $\R^n$ with a bounded projection to an $(n-2)$-dimensional affine subspace.
\end{enumerate}
\end{conjecture}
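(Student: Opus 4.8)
The plan is to attack both assertions through a single Liouville-type mechanism, exploiting that the Euclidean coordinate functions of a minimal immersion are harmonic for the induced metric. I shall carry out the analysis for surfaces in $\R^3$, the setting of the survey, and indicate at the end what the higher-dimensional case requires. Let $f\colon M\to\R^3$ be a complete minimal immersion of a surface, with coordinate functions $x_1,x_2,x_3$. Minimality makes each $x_j$ harmonic, so in a local isothermal parameter each $x_j$ is the real part of a holomorphic function. Under hypothesis~(a), boundedness of $f(M)$ makes all three $x_j$ bounded harmonic functions; under hypothesis~(b) with $n=3$, a bounded projection to a line means $M$ lies in a slab, so exactly one coordinate, say $x_3$, is a bounded harmonic function.

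The first step is to reduce everything to the conformal type of $M$. If one could show that a \emph{complete} minimal surface is necessarily \emph{parabolic}---equivalently that it carries no nonconstant bounded harmonic function, or that Brownian motion on $(M,g)$ is recurrent---then assertion~(a) would follow at once: the bounded harmonic coordinates $x_1,x_2,x_3$ would all be constant, collapsing $f(M)$ to a point and contradicting that $f$ is an immersion. For~(b) the same principle forces the bounded coordinate $x_3$ to be constant, so $M$ lies in a plane and is flat, contradicting nonflatness. Thus the whole argument would rest on the implication \emph{complete} $\Rightarrow$ \emph{parabolic}.

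To establish that implication I would try to bound the conformal type using curvature. The hope is that completeness, together with the convex-hull property of minimal surfaces applied to an exhaustion of $M$ by compact subdomains and the monotonicity formula for the area ratio, prevents the induced metric from decaying too rapidly near the ends; combined with a Huber--Osserman type finiteness theorem for the total curvature this would present $M$ as a compact Riemann surface with finitely many points deleted, hence parabolic, and let me run the Liouville step above. For higher-dimensional hypersurfaces the analogue would replace parabolicity by recurrence of Brownian motion on the $(n-1)$-dimensional manifold and force the bounded coordinates to be constant in the same way.

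The main obstacle is exactly this curvature control, and I expect the strategy to stall there. Completeness of the induced metric does \emph{not} determine the conformal type of a minimal surface: the intrinsic distance can be inflated to infinity by channelling the surface through a \emph{labyrinth} of thin walls whose extrinsic diameter stays bounded, and this is entirely compatible with $M$ being conformally hyperbolic and thus carrying nonconstant bounded harmonic functions. In other words, the implication \emph{complete} $\Rightarrow$ \emph{parabolic} is false, so this attempt is better read as a diagnosis of why assertions~(a) and~(b) fail than as a proof of them; the survey that follows makes precise how Jorge--Xavier, Nadirashvili, and later authors exploit exactly this gap to construct complete bounded minimal surfaces and complete nonflat minimal surfaces confined to a slab.
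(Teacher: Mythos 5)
There is no proof of this statement in the paper to compare against, and there cannot be one: Conjecture \ref{co:Calabi} is stated as a historical conjecture which the paper immediately reports as \emph{false}. Part (b) was refuted by Jorge and Xavier \cite{JorgeXavier1980AM}, who built a complete nonflat conformal minimal immersion $\d\to\R^3$ with a bounded coordinate function, and part (a) was refuted by Nadirashvili's Theorem \ref{th:Nadirashvili} \cite{Nadirashvili1996IM}. Your proposal, read as a proof attempt, therefore fails of necessity, and to your credit you locate the failure exactly where it lies: the implication \emph{complete} $\Rightarrow$ \emph{parabolic} is false for minimal surfaces. The Liouville half of your argument is sound (recurrence would force the bounded harmonic coordinates to be constant), but the conformal type of a complete minimal surface is not controlled by completeness; the labyrinth technique makes intrinsic distances divergent while keeping the surface extrinsically bounded, precisely on a hyperbolic parameter domain such as $\d$. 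The paper's Theorem \ref{th:cCY-slab} (after \cite{AlarconFernandezLopez2012CMH}) turns your diagnosis into a sharp statement: an open Riemann surface carries a complete nonflat minimal immersion into a slab \emph{if and only if} it admits a nonconstant bounded harmonic function, so the obstruction you identify is not merely a gap in one strategy but the exact dividing line.

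Two further points sharpen your self-assessment. First, the Huber--Osserman route you sketch is doomed even before the parabolicity step: a complete minimal surface in $\R^3$ of finite total curvature is proper, hence unbounded, so any counterexample-free regime your finiteness theorem could reach excludes all bounded complete examples a priori, and conversely any bounded complete minimal surface necessarily has infinite total curvature, which no monotonicity or convex-hull argument applied to a bounded image can rule out. Second, the one setting where the spirit of Conjecture \ref{co:Calabi}(a) survives is the \emph{embedded} case: Colding--Minicozzi \cite{ColdingMinicozzi2008AM} and Meeks--P\'erez--Ros \cite{MeeksPerezRos2021Duke} show that complete embedded minimal surfaces of finite topology, or of finite genus with countably many ends, are proper in $\R^3$ and hence unbounded --- but their arguments are geometric-measure-theoretic rather than the conformal Liouville mechanism you propose, which cannot distinguish immersed from embedded. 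So your write-up is best kept as what its final paragraph already is: a correct explanation of why the conjecture fails, consistent with the survey's account, not a proof of it.
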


An immersed submanifold $u:M\to\R^n$ is {\em complete} if the Riemannian metric $u^*ds^2$ induced on $M$ by pulling back the Euclidean metric $ds^2$ on $\R^n$ via the immersion $u$ is a complete metric on $M$: geodesics go on indefinitely. As a consequence of the Hopf-Rinow theorem, the immersion $u$ is complete if and only if the Euclidean length of $u\circ\gamma:[0,1)\to\R^n$ is infinite for every divergent path $\gamma:[0,1)\to M$. Completeness is a natural and very standard condition to impose on a Riemannian manifold when one is interested in its global properties.

Part (b) in Conjecture \ref{co:Calabi} is clearly more ambitious than part (a), which was promoted also by Chern in \cite[p.\ 212]{Chern1966BAMS}. Nothing seems available in the literature about Conjecture \ref{co:Calabi} for $n\ge 4$ (see Assimos, B\'ek\'esi, and Gentile \cite{AssimosBekesiGentile} for some remarks in higher codimension). The  case $n=3$ has been an active focus of research since the 1980s, and it still continues to provide new results and insights. In this section we shall briefly discuss what we know about the existence and properties of complete bounded minimal surfaces in $\R^3$. 
Surfaces in this paper are assumed to be connected. Recall that a $\Cscr^2$ immersed surface $u:M\to\R^3$ is {\em minimal} if it is a stationary point of the area functional for all compactly supported variations; i.e.,  for any smoothly bounded compact domain $D\subset M$ and any smooth variation $u^t$ of $u$ fixing the boundary $bD$ of $D$, the first variation of area at $t=0$ vanishes: $\frac{d}{dt}\big|_{t=0}{\rm Area}(u^t(D))=0$. By a theorem from 1776 due to Meusnier, $u$ is minimal if and only if its mean curvature 
vanishes identically. Further, this happens if and only if $u$ is harmonic in the Riemannian metric $u^*ds^2$. So, if $M$ is an open Riemann surface then a conformal (i.e., angle preserving) immersion $M\to\r^3$ is minimal if and only if it is a harmonic map (see, e.g., \cite[Theorem 2.3.1]{AlarconForstnericLopez2021Book}). For background on minimal surfaces, we refer, e.g., to the classical monographs by Osserman \cite{Osserman1986} and Lawson \cite{Lawson1980}, and the more recent ones by Meeks and P\'erez \cite{MeeksPerez2012Survey} and Alarc\'on, Forstneri\v c, and L\'opez \cite{AlarconForstnericLopez2021Book}. Our focus in this brief survey is mainly on the conformal properties of complete bounded minimal surfaces in $\R^3$. In particular, we shall put the emphasis on which open Riemann surfaces are known to be the underlying complex (= conformal) structure of such a surface. Without the aim of offering a comprehensive survey, we refer the most interested readers to \cite[Chapter 7]{AlarconForstnericLopez2021Book} for a more detailed exposition. 

Conjecture \ref{co:Calabi} (b) was settled in the negative by Jorge and Xavier, who proved in 1980 that there is a complete nonflat conformal minimal immersion $\d=\{z\in\c\colon |z|<1\}\to\R^3$ having a bounded component function, and hence with the range contained between two parallel planes (i.e., with bounded projection into an affine line) \cite{JorgeXavier1980AM}. The groundbreaking counterexample by Jorge and Xavier has been generalized in several ways. We point out that Alarc\'on, Fern\'andez, and L\'opez established in \cite{AlarconFernandezLopez2012CMH} that every nonconstant harmonic function on an open Riemann surface $M$ is a component function of a complete nonflat conformal minimal immersion $M\to\R^3$. The following characterization result follows \cite{AlarconFernandezLopez2012CMH}.

\begin{theorem}\label{th:cCY-slab}
An open Riemann surface $M$ is the underlying complex structure of a complete nonflat minimal surface in $\R^3$ contained between two parallel planes if and only if $M$ admits a nonconstant bounded harmonic function.
\end{theorem}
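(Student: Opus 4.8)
The plan is to prove both implications using the standard dictionary between minimal surfaces in $\R^3$ and conformal minimal immersions, together with the cited result of Alarc\'on, Fern\'andez, and L\'opez \cite{AlarconFernandezLopez2012CMH}. Recall that, once $M$ carries the relevant conformal structure, such a minimal surface is realized by a conformal minimal immersion $u=(u_1,u_2,u_3)\colon M\to\R^3$ whose three component functions $u_j$ are harmonic; conversely, being \emph{contained between two parallel planes} means, after a rigid motion of $\R^3$, that one coordinate $u_3$ has range in a compact interval $[a,b]$, i.e.\ that $u_3$ is a bounded harmonic function. The whole argument will hinge on transporting boundedness between the component function and the harmonic function, and on the fact that \emph{nonflatness} rules out constant coordinates.

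For the necessity (only if) direction, I would start from a complete nonflat minimal surface in $\R^3$ lying in a slab. Viewing it as a conformal minimal immersion $u\colon M\to\R^3$ and normalizing the slab to $\{a\le x_3\le b\}$ by a rigid motion, the third coordinate $u_3\colon M\to[a,b]$ is harmonic and bounded. It remains only to check that $u_3$ is nonconstant: were it constant, $u(M)$ would lie in a single plane $\{x_3=c\}$ and the surface would be flat, contradicting the hypothesis. Hence $u_3$ is a nonconstant bounded harmonic function on $M$, which is exactly what is required.

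For the sufficiency (if) direction, suppose $M$ admits a nonconstant bounded harmonic function $h$. Here I would invoke the theorem of Alarc\'on, Fern\'andez, and L\'opez quoted above: every nonconstant harmonic function on $M$ is a component function of a complete nonflat conformal minimal immersion $M\to\R^3$. Applying this to $h$ produces such an immersion $u=(u_1,u_2,u_3)$ with, say, $u_3=h$. Since $h$ is bounded, its range lies in some $[a,b]$, so $u(M)$ is contained in the slab $\{a\le x_3\le b\}$ between the two parallel planes $\{x_3=a\}$ and $\{x_3=b\}$. Thus $M$ is the underlying complex structure of a complete nonflat minimal surface contained between two parallel planes.

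The main obstacle is entirely concentrated in the sufficiency direction, and it is precisely what the cited construction of \cite{AlarconFernandezLopez2012CMH} supplies: realizing a prescribed harmonic function as a coordinate of a \emph{complete} immersion, while simultaneously respecting the conformality (nullity) condition $\sum_j(\partial u_j)^2=0$ and forcing completeness through the remaining two coordinates without losing the immersion property. Granting that result, the only point still needing care in the present statement is that the harmonic function fed in is reproduced verbatim as the component function, so that boundedness passes directly from $h$ to $u_3$ and hence to the slab condition; the necessity direction and the nonflatness bookkeeping are then routine.
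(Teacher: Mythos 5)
Your proof is correct and takes essentially the same route as the paper, which presents this theorem as a direct consequence of the cited result of Alarc\'on, Fern\'andez, and L\'opez \cite{AlarconFernandezLopez2012CMH}: that result supplies the sufficiency, while the necessity is the routine observation that the coordinate function orthogonal to the slab is a bounded harmonic function which must be nonconstant, since otherwise the surface would lie in a plane and hence be flat. Your write-up simply fills in these details, exactly as the paper intends.
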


In 1982, by which time the Jorge-Xavier paper had appeared, Yau \cite[Problem 91]{Yau1982} revisited Conjecture \ref{co:Calabi} (a) and turned it into a question which became known in the literature as the {\em Calabi-Yau problem}. This was settled in 1996 by the following landmark result due to Nadirashvili \cite{Nadirashvili1996IM}.
%
%
\begin{theorem}\label{th:Nadirashvili}
There is a complete conformal minimal immersion $\D\to\R^3$ with the range contained in the unit ball $\B=\{x\in\R^3\colon |x|<1\}\subset\R^3$. Furthermore, there is such an immersion with negative Gaussian curvature.
\end{theorem}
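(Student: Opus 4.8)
The plan is to produce the immersion as the uniform-on-compacta limit of a sequence of conformal minimal immersions of the disk, obtained by an inductive deformation that trades intrinsic length against extrinsic size. I would work throughout via the Weierstrass representation: a conformal minimal immersion $u\colon\D\to\R^3$ is recovered as $u=\mathrm{Re}\int\Phi$ from a holomorphic $1$-form $\Phi=(\phi_1,\phi_2,\phi_3)$ satisfying the nullity relation $\phi_1^2+\phi_2^2+\phi_3^2=0$ and having no common zeros, with induced metric $u^*ds^2=\tfrac12|\Phi|^2$; the immersion is nondegenerate exactly where $|\Phi|>0$. Since $\D$ is simply connected there are no period obstructions, so $\mathrm{Re}\int\Phi$ is globally well defined and the whole problem reduces to prescribing the null datum $\Phi$.

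First I would fix the base point $0$ and an exhaustion of $\D$ by concentric disks $\overline D_n\subset D_{n+1}$ with $\bigcup_n D_n=\D$, and build conformal minimal immersions $u_n$ enjoying two properties: (i) $\sup_{\overline D_{n-1}}|u_n-u_{n-1}|\le\varepsilon_n$, and (ii) the intrinsic distance in $u_n^*ds^2$ across the annulus $A_n=\overline D_n\setminus D_{n-1}$ is at least $c_n$. Choosing $\sum_n\varepsilon_n<\infty$ yields uniform convergence on compacta to a limit immersion $u_\infty$ with bounded image, which after a final constant rescaling lands inside $\B$; choosing $\sum_n c_n=\infty$ (and keeping later perturbations small enough on $\overline D_n$ that these length bounds persist in the limit) forces every divergent path, which must cross infinitely many of the annuli $A_n$, to have infinite $u_\infty^*ds^2$-length, i.e.\ completeness.

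The inductive step, which is the technical heart, is a labyrinth construction in the spirit of Nadirashvili. In $A_n$ I would place a compact maze $\Lambda_n$ of thin concentric and radial walls, designed so that $\overline D_{n-1}\cup\Lambda_n$ is Runge in $\overline D_n$ and so that any arc crossing $A_n$ must run a long Euclidean distance through $\Lambda_n$; then, by a Runge--Mergelyan approximation for null holomorphic data, I would replace $\Phi$ by a nearby null $1$-form whose modulus is very large along $\Lambda_n$. Two estimates must hold at once, and reconciling them is essentially all of the work: property (i) holds because on $\overline D_{n-1}$ the integral $\int\Phi$ can be evaluated along paths avoiding $\Lambda_n$; property (ii) holds because crossing arcs pick up the inflated modulus along the walls. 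The delicate point is that naive inflation of $|\Phi|$ would push the surface out of any fixed ball, so the enlargement must be carried out in directions essentially tangent to the sphere $\{|x|=|u_{n-1}|\}$, so that a large tangential displacement raises the radius $|u_n|$ only by a Pythagorean amount $\le\varepsilon_n$. I expect this simultaneous control of completeness and boundedness, together with preserving nullity of $\Phi$ and a lower bound on $|\Phi|$ over each fixed compact set, to be the main obstacle.

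Finally, for the negative-curvature statement I would use that for a conformal minimal surface the Gaussian curvature vanishes exactly at the critical points of the Gauss map $g$ (equivalently where $dg=0$) and is strictly negative elsewhere. At each stage I would arrange, again by approximation, that $g$ has no critical points on $\overline D_n$. Since nonvanishing of $dg$ is an open condition stable under the small perturbations introduced at all later stages, the limit Gauss map is an immersion, and hence $u_\infty$ has strictly negative Gaussian curvature at every point.
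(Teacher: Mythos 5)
Your overall architecture coincides with the paper's account of Nadirashvili's construction: an inductive sequence of conformal minimal immersions obtained by inflating the induced metric along labyrinths placed in successive annuli, uniform closeness on the previously fixed compact piece, a divergent series of intrinsic widths forcing completeness, and the Pythagorean trade-off (tangential pushing) to keep the extrinsic size bounded. The limiting argument and your treatment of the negative-curvature refinement via a Gauss map free of critical points (stable under all later perturbations if the $\varepsilon_k$ are chosen small enough, using that $\Cscr^0$-closeness of harmonic maps gives $\Cscr^1$-closeness on compacta) are sound in outline.

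The genuine gap is the inductive step itself, which you rightly call the heart of the matter but leave unproved, and the tool you invoke cannot deliver it. A Runge--Mergelyan approximation of the null datum that merely makes $|\Phi|$ large along the labyrinth $\Lambda_n$ controls the induced metric $\tfrac12|\Phi|^2$, but gives no control whatsoever on the direction of the displacement $\mathrm{Re}\int(\Phi_{\mathrm{new}}-\Phi_{\mathrm{old}})$, which is the quantity entering the Pythagorean estimate: one needs every point that moves appreciably to move approximately orthogonally to its own position vector, and this is not a condition on $|\Phi|$. Nor can it be arranged by a single approximation with one distinguished direction, since distinct walls of $\Lambda_n$ have images at different points of the sphere with different tangent planes. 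This is precisely why Nadirashvili's proof splits the labyrinth into many blocks and performs, for each block separately, a L\'opez--Ros deformation (which preserves the third coordinate) after rotating coordinates so that the third axis is parallel to the position vector of the image of that block; and why, even then, the placement of the surface off the approximation set remains uncontrolled, forcing one to cut away small pieces, i.e.\ to restrict to a simply connected subdomain --- harmless only because such a subdomain is again conformally $\D$, which, as the paper stresses, is the very reason this method controls the conformal structure only in the simply connected case. Your fixed exhaustion plus one modulus-inflating approximation per annulus bypasses all of this, so the crucial inductive bound $u_n(\overline D_n)\subset r_n\B$ with $\sup_n r_n<\infty$ --- which, note, does not follow from your conditions (i), (ii) and $\sum_n\varepsilon_n<\infty$ --- is not established. (The modern way to gain this position-dependent directional control in one stroke is the Riemann--Hilbert method of Theorem \ref{th:RH}, where the boundary discs $\alpha(z,\cdot)$ encode the push direction at each boundary point.)
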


The second part of Nadirashvili's theorem settled the problem posed by Hadamard \cite{Hadamard1898} in 1898 asking whether there are complete bounded immersed surfaces in $\R^3$ with negative curvature. This question remains open and is likely very difficult for embedded surfaces (see, e.g., Calabi \cite[p.\ 170]{Calabi1965Conjecture}, Rozendorn \cite[\textsection2.3]{Rozendorn1992}, or Ghomi \cite[Problem 7.2]{Ghomi2017}). It is on the other hand known that there are complete bounded embedded surfaces in $\R^3$ with non-positive Gaussian curvature (see Rozendorn \cite{Rozendorn1961} and Xavier \cite{Xavier1984}). Nadirashvili's surfaces in Theorem \ref{th:Nadirashvili} cannot be embedded. In fact, a complete embedded minimal surface in $\R^3$ of finite genus and at most countably many ends is proper in space, hence unbounded, as was shown by Meeks, P\'erez, and Ros \cite{MeeksPerezRos2021Duke}. The same result for surfaces of finite topology was previously established by Colding and Minicozzi \cite{ColdingMinicozzi2008AM}. It still remains open the question whether there is a complete bounded embedded minimal surface (of infinite genus or with uncountably many ends) in $\R^3$.

Nadirashvili's example in \cite{Nadirashvili1996IM} is somewhat similar to that of Jorge and Xavier in \cite{JorgeXavier1980AM}: both constructions use the Weierstrass representation formula of minimal surfaces (see, e.g., \cite[\textsection2.3]{AlarconForstnericLopez2021Book}), the L\'opez-Ros deformation (a simple method introduced in \cite{LopezRos1991JDG} to deform a given conformal minimal immersion $\d\to\r^3$ preserving a component function), and the classical Runge approximation theorem for holomorphic functions (see, e.g., \cite[Chapter VIII]{Conway1973}) on a labyrinth of compact sets in $\D$. Nadirashvili's construction follows from a recursive procedure, and the fundamental new idea is the following clever application of Pythagoras' theorem. Assume that we have a compact minimal disc $S$ in $\R^3$ that is contained in a ball $r\B=\{x\in\R^3\colon |x|<r\}$ centered at the origin and of a certain radius $r>0$. If we apply a deformation to $S$ that is small outside a neighborhood of the boundary  $bS$ and pushes each boundary point $x\in bS$ a distance approximately $s>0$ in a direction approximately orthogonal to the position vector of $x$ itself in $\R^3$, then it is clear that we produce an increase of an amount of approximately $s$ of the boundary distance from a fixed interior point of the surface. However, by Pythagoras' theorem, the resulting surface is contained in the ball of radius $\sqrt{r^2+s^2}$, so the increase of its extrinsic diameter is of the order of $s^2$. With this idea in mind and using the aforementioned tools in a rather involved way, Nadirashvili was able to construct a sequence of conformal minimal immersions $u_j:\overline\D=\{z\in\C\colon |z|\le1\}\to\R^3$ and numbers $r_j>0$, $\rho_j>0$, and $\epsilon_j>0$ satisfying the following conditions:
\begin{enumerate}[A]
\item[$\bullet$] $u_j(0)=0$ for all $j\in\N$.

\smallskip
\item[$\bullet$] $|u_j(z)-u^{j-1}(z)|<\epsilon_j$ for all $z\in (1-\epsilon_j)\overline\D$ for all $j\ge 2$.

\smallskip
\item[$\bullet$] $u_j(\overline\D)\subset r_j\B$  for all $j\in\N$.

\smallskip
\item[$\bullet$] $\dist_{u_j}(0,b\overline\D)>\rho_j$  for all $j\in\N$, where $\dist_{u_j}$ denotes the distance function associated to the metric $u_j^*ds^2$ induced on $\overline\D$ by pulling back the Euclidean metric $ds^2$ on $\R^3$ via the immersion $u_j$, and $b\overline\D=\{z\in\c\colon |z|=1\}$.

\smallskip
\item[$\bullet$] $r_j=\sqrt{r_{j-1}^2+1/j^2}$, $\rho_j=\rho_{j-1}+1/j$, and $\epsilon_j<\epsilon_{j-1}/2$ for all $j\ge 2$.
\end{enumerate}
The key observation is that 
\[
	\lim_{j\to\infty}r_j=r<+\infty \quad\text{and}\quad \lim_{j\to\infty}\rho_j=+\infty.
\] 
It is then easily seen that if the sequence $\epsilon_j$ decreases to $0$ sufficiently fast, then there is a limit map $u=\lim_{j\to\infty}u_j:\D\to\R^3$ that is a complete conformal minimal immersion with $u(\D)\subset r\B$. A brief explanation of Nadirashvili's construction can be found in \cite[\textsection7.1]{AlarconForstnericLopez2021Book}, while complete details are available in Nadirashvili's original paper \cite{Nadirashvili1996IM} as well as in the note by Collin and Rosenberg \cite{CollinRosenberg1999BSC}.

It was again Yau in his 2000 millenium lecture who, in view of Nadirashvili's theorem, proposed several questions about the topological, geometric, and conformal properties of complete bounded minimal surfaces in $\R^3$; see \cite[p.\ 360]{Yau2000AMS} and \cite[p.\ 241]{Yau2000AJM}. This contributed to motivate the study of these surfaces, giving rise to a large literature; we refer once again to \cite[Chapter 7]{AlarconForstnericLopez2021Book} for a survey and references. Indeed, Nadirashvili's method has been the seed of several construction techniques, leading to a wide variety of examples. In particular, this method, together with the technique developed by Morales in \cite{Morales2003GAFA} for constructing a proper conformal minimal disc in $\R^3$, has been generalised to the construction of complete properly immersed minimal surfaces with arbitrary topology in any given convex domain of $\R^3$; see the paper by Ferrer, Mart\'in, and Meeks \cite{FerrerMartinMeeks2012AM} and the references therein. However, despite the remarkable flexibility of Nadirashvili's construction technique, it does not seem to enable one to control the underlying complex structure on the examples, except of course in the simply-connected case when any (complete) bounded minimal such surface must be conformally equivalent to the disc $\D$. Indeed, the use of Runge's theorem at each step of the inductive construction does not allow to control the placement in $\R^3$ of some parts of the surface, and one has to cut away some small pieces of the surface in order to keep it suitably bounded (or contained in the given convex domain). This suffices to prescribe the topology of the examples but it does not enable to control their complex structure, and hence the so-called {\em conformal Calabi-Yau problem} asking which open Riemann surfaces (other than the disc) are the complex structure of a complete bounded minimal surface in $\R^3$ remained open.

Recall that a {\em bordered Riemann surface} is an open connected Riemann surface $M$ that is the interior, $M = \overline M \setminus bM$, of a compact one dimensional complex manifold $\overline M$ with smooth boundary $bM$ consisting of finitely many closed Jordan curves; such an $\overline M$ is a {\em compact bordered Riemann surface}. By \cite[Theorem 8.1]{Stout2007}, every bordered Riemann surface is conformally equivalent to a domain $M$ of the form $M=R\setminus \bigcup_{i=1}^kD_i$, where $R$ is a compact Riemann surface and $D_1,\ldots,D_k$ $(k\in\N)$ are mutually disjoint, smoothly bounded closed discs in $R$, by a map smoothly extending to the boundary.

The following summarizes the most relevant currently known positive results concerning the conformal Calabi-Yau problem.

%
%
\begin{theorem}\label{th:cCY}
\begin{enumerate}[\rm (i)]
\item Every bordered Riemann surface $M=R\setminus\bigcup_{i=1}^kD_i$ is the complex structure of a complete bounded minimal surface in $\R^3$.

\smallskip

\item Let $R$ be a compact Riemann surface and $M=R\setminus\bigcup_{i=1}^\infty D_i$ be a domain in $R$ whose complement is a countable union of mutually disjoint, smoothly bounded closed discs $D_i$ (i.e., diffeomorphic images of $\overline\D$). Then $M$ is the complex structure of a complete bounded minimal surface in $\R^3$. 

\smallskip

\item In every compact Riemann surface $R$ there is a Cantor set $C$ whose complement $M=R\setminus C$ is the complex structure of a complete bounded minimal surface in $\R^3$.

\smallskip

\item Let $R$ be a compact Riemann surface and $M=R\setminus (D\cup E)$ be a domain in $R$ where $E\subset R$ is compact and $D=\bigcup_{i=1}^\infty D_i$ is the union of a countable family of mutually disjoint, closed geometric discs.\footnote{A geometric disc in a compact Riemann surface $R$ is a topological disc whose lifts in the universal covering of $R$, which is $\D$, $\C$, or $\C\P^1$, is a round disc.} Fix a point $p_0\in M$ and denote $M_i=R\setminus \bigcup_{j=1}^iD_i$, $i\in \N$. If $\lim_{i\to\infty}\dist_{M_i}(p_0,E)=+\infty$ holds, then $M$ is the complex structure of a complete bounded minimal surface in $\R^3$. 
\end{enumerate}
\end{theorem}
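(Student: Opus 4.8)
The four assertions rest on a single analytic engine, so the plan is to establish the compact bordered case (i) first and to obtain (ii)--(iv) by limiting refinements of it. For (i) I would start from an arbitrary conformal minimal immersion $\overline M\to\R^3$ of the given compact bordered surface (such maps exist in abundance by the general theory of conformal minimal immersions of open Riemann surfaces) and improve it through an inductive sequence $u_n$. At each step the goal is to raise the intrinsic boundary distance $\dist_{u_n}(p_0,bM)$ from a fixed base point $p_0$ by a definite amount, while perturbing the map by an error that is summably small on a slightly smaller subsurface and keeping the image inside a fixed ball. The device for a single step is a boundary-controlled, Riemann-Hilbert-type deformation of conformal minimal immersions, which pushes $u_n(bM)$ outward in directions roughly orthogonal to the position vector; by the Pythagoras observation recalled above, an intrinsic push of size $s$ costs only an extrinsic increase of order $s^2$, so with radii $r_n=\sqrt{r_{n-1}^2+1/n^2}$ the images stay in a fixed ball while $\dist_{u_n}(p_0,bM)\to+\infty$. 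The decisive point, and the reason one cannot simply reuse Nadirashvili's scheme, is that this deformation fixes the conformal type of $M$, whereas the Runge-approximation step in the classical construction forces one to discard pieces and thereby loses control of the complex structure. A geometrically fast convergence $u_n\to u$ then produces a complete bounded conformal minimal immersion of the prescribed bordered surface.

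With this building block, I would obtain (ii) and (iii) by an outer recursion that exhausts $M$ by bordered surfaces and controls the limit. For (ii) set $M_n=R\setminus\bigcup_{i=1}^n D_i$, an increasing sequence of bordered surfaces with $\bigcup_n M_n=M$, and build immersions $u_n:\overline{M_n}\to\B$ so that $u_{n+1}$ simultaneously approximates $u_n$ on $\overline{M_{n-1}}$ and, via the engine of the first paragraph applied on the larger surface, raises the intrinsic distance from $p_0$ to the newly added boundary past $n$. For (iii) only the combinatorics of the exhaustion changes: realize the Cantor set as $C=\bigcap_n K_n$, where each $K_n$ is a disjoint union of finitely many smoothly bounded closed discs, each disc of $K_n$ contains at least two discs of $K_{n+1}$, and $K_{n+1}\subset\mathrm{int}\,K_n$ with diameters shrinking to $0$; then $M_n=R\setminus K_n$ is bordered, $\bigcup_n M_n=R\setminus C=M$, and the same recursion applies. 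In either case a sufficiently fast convergence rate forces $u=\lim_n u_n$ to be a complete bounded conformal minimal immersion of $M$ with image in $\B$.

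Part (iv) is the same scheme with the truncations $M_i=R\setminus\bigcup_{j=1}^i D_j$ serving as the exhaustion, the hypothesis $\lim_{i\to\infty}\dist_{M_i}(p_0,E)=+\infty$ providing precisely the intrinsic room to drive the distance from $p_0$ to $E$ to infinity in the limit; absent this hypothesis the metric near $E$ could stay bounded and completeness would fail. This pinpoints what I expect to be the main obstacle throughout: securing completeness of the limit---that every divergent path, in particular one running into a newly created end, into the Cantor set, or into $E$, has infinite $u$-length---while at the same time keeping the image bounded and the conformal type rigid. The tension is that boosting the metric along the ever-refining boundary competes directly with the uniform convergence needed to preserve both the complex structure and the bound; the resolution is to budget the quantitative $s$-versus-$s^2$ gain of each deformation step against a geometrically decaying sequence of approximation errors $\epsilon_n$, so that the distances to all ends, old and new, diverge while the maps converge. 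The delicate bookkeeping is to verify that this budget can be maintained uniformly as the number of boundary components grows and, for (iii), that the metric blows up fast enough in the channels separating the splitting discs to render every Cantor end complete.
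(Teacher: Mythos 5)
Your engine is the right one, and it is the same one the paper points to for Theorem~\ref{th:cCY}: run Nadirashvili's intrinsic-versus-extrinsic (Pythagoras) balance with the Riemann--Hilbert deformation of Theorem~\ref{th:RH} in place of Runge's theorem, so that nothing is cut away and the conformal type is frozen; this is exactly how (i) is proved in the cited sources. However, your outer recursion has two concrete problems. First, in (ii) the exhaustion goes the wrong way: $M_n=R\setminus\bigcup_{i=1}^n D_i$ is a \emph{decreasing} sequence of bordered surfaces with $\bigcap_n M_n=M$, not an increasing one with $\bigcup_n M_n=M$, and consequently ``approximate $u_n$ on $\overline{M_{n-1}}$'' and ``apply the engine on the larger surface'' do not parse, since $u_{n+1}$ lives on the \emph{smaller} surface $\overline{M_{n+1}}\subset\overline{M_n}$. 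The correct step restricts $u_n$ to $\overline{M_{n+1}}$ and then boosts the intrinsic distance to the new boundary curve $bD_{n+1}$ while nearly preserving the bounds already gained at $bD_1,\ldots,bD_n$; this is repairable, but as written the induction is incoherent.

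Second, and more seriously, for (iii) it is not true that ``only the combinatorics of the exhaustion changes''. There the surfaces genuinely increase, $M_n=R\setminus K_n\subset M_{n+1}$, and this creates a step your proposal has no tool for: $u_n$ is defined only on $R\setminus K_n$, so before any Riemann--Hilbert boosting can be performed on $R\setminus K_{n+1}$ you must \emph{extend} $u_n$ across the channels $K_n\setminus K_{n+1}$ separating each parent disc's children, as a conformal minimal immersion. Such an extension cannot be produced by deformation of the given map; one first extends $u_n$ as a generalized conformal minimal immersion over the dividing arcs and then invokes the Mergelyan theorem with interpolation for conformal minimal immersions (which also handles the period/flux conditions) to obtain an honest immersion on a neighborhood, i.e.\ on $R\setminus\mathring K_{n+1}$. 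This is precisely how the paper's own proof of Theorem~\ref{th:} proceeds (extend over the segment $E_0$, then apply Mergelyan to get $u_0'$ on $(R\setminus\mathring\Delta_0)\cup\overline\Gamma_0$, and only then apply Lemma~\ref{lem:}), and the paper explicitly lists the Mergelyan theorem and the Forstneri\v c--Wold method of exposing boundary points as crucial ingredients alongside the Riemann--Hilbert method. Without this extension step your recursion for (iii) never produces a candidate map on the enlarged surface, so the completeness bookkeeping you rightly worry about cannot even begin.
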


Statements (i), (ii), and (iv) are due to Alarc\'on and Forstneri\v c \cite{AlarconForstneric2015MA,AlarconForstneric2021RMI}, while  (iii) is a result of Forstneri\v c \cite{Forstneric2022RMI}. We point out that the Cantor set in the construction in (iii) cannot be specified in advance. The distance $\dist_{M_i}(p_0,E)$ in (iv) is measured on paths in $M_i$ with respect to any given Riemannian metric on $R$. Note that the set $E$ in (iv) may have isolated points.

By the uniformization theorem of He and Schramm \cite{HeSchramm1993}, every open Riemann surface of finite genus and countably many ends is conformally equivalent to a {\em circle domain} in a compact Riemann surface $R$; i.e., a domain of the form $M=R\setminus\bigcup_iD_i$ whose complement is the union of countably many connected components $D_i$, each of which is a closed geometric disc or a point; see Stout \cite[Theorem 8.1]{Stout2007} for the case of finitely many ends. The ends $D_i$ of $M$ which are discs are called {\em disc ends}, while those which are points are called {\em point ends}. The following immediate corollary of statements (i) and (ii) in Theorem \ref{th:cCY} gives a complete solution to the conformal Calabi-Yau problem for surfaces of finite topology (statement (a)); the case of infinite topology is far from being fully understood, though. 
%
%
\begin{corollary}[\text{\cite[Corollary 7.4.7]{AlarconForstnericLopez2021Book}}]
\begin{enumerate}[\rm (a)]
\item An open Riemann surface of finite topology is the complex structure of a complete bounded minimal surface in $\R^3$ if and only if it has no point ends.

\smallskip

\item Every open Riemann surface of finite genus and countably many ends, all of which are disc ends, is the complex structure of a complete bounded minimal surface in $\R^3$.
\end{enumerate}
\end{corollary}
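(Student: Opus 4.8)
The plan is to reduce both statements to a uniformization theorem combined with the positive results collected in Theorem~\ref{th:cCY}, the only genuinely new ingredient being the \emph{necessity} of having no point ends in part~(a). I would treat the affirmative directions first, since they are formal. For part~(b), the uniformization theorem of He and Schramm realizes an open Riemann surface $M$ of finite genus with countably many ends as a circle domain $M=R\setminus\bigcup_i D_i$ in a compact Riemann surface $R$, each $D_i$ being a closed geometric disc or a point, with disc ends corresponding to the discs and point ends to the points. Under the hypothesis that all ends are disc ends, every $D_i$ is a geometric disc; since the boundary of a geometric disc lifts to a round circle in the universal cover $\D$, $\C$, or $\C\P^1$, it is a real-analytic Jordan curve, so each $D_i$ is a smoothly bounded closed disc. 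If there are finitely many of them, $M$ is a bordered Riemann surface and Theorem~\ref{th:cCY}(i) applies; if there are countably infinitely many, Theorem~\ref{th:cCY}(ii) applies verbatim. Either way $M$ is the complex structure of a complete bounded minimal surface in $\R^3$, which proves (b).

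For the \emph{if} direction of part~(a), finite topology means finite genus and finitely many ends, so by Stout's theorem an open Riemann surface of finite topology with no point ends is conformally a bordered Riemann surface $M=R\setminus\bigcup_{i=1}^k D_i$ with the $D_i$ smoothly bounded discs, and Theorem~\ref{th:cCY}(i) again gives the conclusion.

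The substantive part is therefore the \emph{only if} direction of (a): I would show that a complete bounded minimal surface can never have a point end. Suppose $u=(u_1,u_2,u_3)\colon M\to\R^3$ is a complete conformal minimal immersion with bounded image and that $M$ has a point end, realized near that end as a punctured disc $\D^*=\{z\in\C\colon 0<|z|<1\}$. Each component $u_j$ is then a bounded harmonic function on $\D^*$, so by the removable singularity theorem for bounded harmonic functions it extends harmonically across the puncture $0$. Consequently the holomorphic derivatives $\phi_j=\partial u_j/\partial z$ extend holomorphically across $0$ and are bounded near it, whence the conformal factor of the induced metric $u^*ds^2=2\sum_{j}|\phi_j|^2\,|dz|^2$ is bounded on $\D^*$. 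But then the radial segment running into the puncture, $t\mapsto t$ for $t\in(0,1/2]$, is a divergent path in $M$ (it leaves every compact subset of $M$ as $t\to 0^+$, since $0\notin M$) of finite $u^*ds^2$-length $\int_0^{1/2}\lambda(t)\,dt<+\infty$, contradicting completeness. Hence no point end can occur, finishing (a).

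I expect the main obstacle to be precisely this removable-singularity argument, and the feature to watch is that it plays boundedness and completeness against each other: boundedness of $u$ forces the component functions to extend across the puncture, making the induced metric locally finite there, while completeness demands that every path running into the puncture have infinite length. These two requirements are incompatible, and that incompatibility is exactly what rules out point ends. By contrast, the affirmative directions carry no real obstacle at this stage, since all of their difficulty has been absorbed into the constructions underlying Theorem~\ref{th:cCY}(i)--(ii), which I take as given.
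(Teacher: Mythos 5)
Your proposal is correct and follows essentially the same route as the paper: the positive directions are exactly the paper's intended reduction of (b) and the ``if'' part of (a) to Theorem~\ref{th:cCY}(i)--(ii) via the He--Schramm (resp.\ Stout) uniformization, noting that geometric discs are smoothly bounded. The ``only if'' part of (a) --- boundedness forces the harmonic components to extend across a puncture, so the induced metric $2\sum_j|\partial u_j/\partial z|^2\,|dz|^2$ is bounded near it and a radial path into the puncture has finite length, contradicting completeness --- is precisely the standard argument underlying the cited \cite[Corollary 7.4.7]{AlarconForstnericLopez2021Book}, which the paper leaves implicit.
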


The main new ingredient in the proof of Theorem \ref{th:cCY} is the existence of approximate solutions to the {\em Riemann-Hilbert problem} for conformal minimal surfaces in $\R^3$ parameterized by an arbitrary compact bordered Riemann surface (see Theorem \ref{th:RH}). Indeed, replacing Runge's theorem in Nadirashvili's construction by suitable such solutions gives enough control on the placement of the whole surface in $\R^3$ to be able to avoid cutting at each step in the inductive process, thereby allowing to control the conformal structure of the examples. Another crucial tool is the method for exposing boundary points of a complex curve due to Forstneri\v c and Wold \cite{ForstnericWold2009}.

Assume that we are given a holomorphic map $f:\overline\D\to\C^n$ and a continuous map $g:b\D\times\overline\D\to\C^n$ such that $g(z,\cdot):\overline\D\to\C^n$ is holomorphic and $g(z,0)=f(z)$ for all $z\in b\D$. Given numbers $0<r<1$ and $\epsilon>0$, the classical approximate Riemann-Hilbert problem asks for finding a holomorphic map $F:\overline\D\to\C^n$ and a number $r'\in[r,1)$ satisfying the following properties:
\begin{enumerate}[A]
\item[$\bullet$] $|F(z)-f(z)|<\epsilon$ for all $z\in r'\overline\D$.

\smallskip

\item[$\bullet$] $\dist(F(z),g(z,b\D))<\epsilon$ for all $z\in b\D$.

\smallskip

\item[$\bullet$] $\dist(F(\rho z),g(z,\overline\D))<\epsilon$ for all $z\in b\D$ and $\rho\in [r',1]$.
\end{enumerate}
These conditions can be adapted to any compact bordered Riemann surface $\overline M$ in place of the disc $\overline\D$ as the domain of $f$ and $F$, and it is easy to see that the problem always has a solution; see, e.g., \cite{ForstnericGlobevnik2001MRL,DrinovecForstneric2012IUMJ} and also \cite{AlarconForstneric2015Abel}. The Riemann-Hilbert problem was introduced in the theory of minimal surfaces in $\R^3$ by Alarc\'on and Forstneri\v c in \cite{AlarconForstneric2015MA} (see also \cite{AlarconForstneric2015Abel}), leading to the proof of Theorem \ref{th:cCY} (i). It was later improved and extended for minimal surfaces in $\R^n$ with arbitrary $n\ge 3$ by Alarc\'on, Drinovec Drnov\v sek, Forstneri\v c, and L\'opez in \cite{AlarconDrinovecForstnericLopez2015PLMS,AlarconDrinovecForstnericLopez2019TAMS}. Let us record here the most precise version of this result in case $n=3$ that is currently available in the literature. Recall that the {\em flux} ${\rm Flux}(u)$ of a conformal minimal immersion $u:M\to\R^3$ is the cohomology class of its conjugate differential $d^cu=\imath(\bar\partial u-\partial u)$ in $H^1(M,\R^3)$; see, e.g., \cite[Definition 2.3.2]{AlarconForstnericLopez2021Book}. 
%
\begin{theorem}[\text{\cite[Theorem 6.4.1]{AlarconForstnericLopez2021Book}}]\label{th:RH}
Let $\overline M=M\cup bM$ be a compact bordered Riemann surface and $I_1,\ldots,I_k$ be mutually disjoint compact arcs in $bM$ that are not connected components of $bM$. Set $I=\bigcup_{i=1}^kI_k$ and choose an annular neighborhood $A\subset \overline M$ of $bM$ and a smooth retraction $\rho:A\to bM$. Also let $u:\overline M\to\R^3$ be a conformal minimal immersion of class $\Cscr^1(\overline M)$, $r:bM\to[0,1]$ be a continuous map with support in the relative interior of $I$, $\alpha:I\times\overline\D\to\R^3$ be a map of class $\Cscr^1$ such that $\alpha(z,\cdot):\overline\D\to\R^3$ is a conformal minimal immersion with $\alpha(z,0)=0$ for all $z\in I$, and $\Lambda\subset\mathring M$ be a finite set. Consider the map $\chi:bM\times\overline\D\to\R^3$ given by
\[
	\chi(z,\xi)=u(z)+\alpha(z,r(z)\xi),\quad z\in bM,\; \xi\in\overline\D,
\]
where $\alpha(z,r(z)\xi)=0$ for $z\in bM\setminus I$. Then, for any numbers $\epsilon>0$ and $d\in\N$ and any neighborhood $U\subset A$ of $I$ in $M$, there are a neighborhood $\Omega\subset U$ of ${\rm supp}(r)$ in $M$ and a conformal minimal immersion $\tilde u:\overline M\to\R^3$ such that the following hold:
\begin{enumerate}[A]
\item[$\bullet$] $|\tilde u(z)-u(z)|<\epsilon$ for all $z\in \overline M\setminus \Omega$.

\smallskip
\item[$\bullet$] $\dist(\tilde u(z),\chi(z,b\D))<\epsilon$ for all $z\in bM$.

\smallskip
\item[$\bullet$] $\dist(\tilde u(z),\chi(\rho(z),\overline\D))<\epsilon$ for all $z\in\Omega$.

\smallskip
\item[$\bullet$] $\tilde u-u$ vanishes to order $d$ at all points in $\Lambda$.

\smallskip
\item[$\bullet$] ${\rm Flux}(\tilde u)={\rm Flux}(u)$.
\end{enumerate}
\end{theorem}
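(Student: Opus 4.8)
I would pass to the Weierstrass data and reduce the whole statement to a Riemann--Hilbert problem for holomorphic $1$-forms valued in the punctured null quadric
\[
	\Agot_*=\{z\in\C^3 : z_1^2+z_2^2+z_3^2=0\}\setminus\{0\}.
\]
Fix a base point $p_0\in\mathring M$ and write $\phi=\di u$, a holomorphic $1$-form on $\overline M$ with values in $\Agot_*$ (nullity $\phi_1^2+\phi_2^2+\phi_3^2=0$ is the conformality of $u$, and nonvanishing is the immersion condition), so that $u(p)=u(p_0)+2\Re\int_{p_0}^p\phi$. Since $2\Re\phi=du$ is exact, the real periods $\Re\oint_{\gamma_j}\phi$ vanish over a homology basis $\gamma_1,\dots,\gamma_\ell$ of $H_1(\overline M;\Z)$, while $\Flux(u)(\gamma_j)=2\Im\oint_{\gamma_j}\phi$. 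Hence it suffices to produce a nearby $\Agot_*$-valued holomorphic $1$-form $\tilde\phi$ with $\oint_{\gamma_j}\tilde\phi=\oint_{\gamma_j}\phi$ for every $j$---which makes the integral $\tilde u$ well defined and preserves the flux---agreeing with $\phi$ to order $d$ at $\Lambda$ and whose integral satisfies the three distance estimates.

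Next I would set up the boundary data in this derivative picture and localize near the arcs. Each $\alpha(z,\cdot)$ is conformal minimal, so $\di_\xi\alpha(z,\cdot)$ is an $\Agot_*$-valued form on $\overline\D$, and along a short transverse arc running out to a point $z\in bM$ the prescribed sweep $\xi\mapsto\chi(z,\xi)=u(z)+\alpha(z,r(z)\xi)$ is recovered by integrating this form. I would thicken $I$ to a thin bump domain $\Omega\subset U$ containing $\supp(r)$, foliated by the transverse arcs $\rho^{-1}(z)$, and on $\Omega$ solve the classical approximate Riemann--Hilbert problem recalled before the statement---but now for maps into $\Agot_*$ rather than into $\C^3$---so that the integral of the resulting local form stays $\Cscr^0$-close to $\phi$ off $\Omega$, sweeps out the disks $\chi(z,\overline\D)$ across $\Omega$, and lands near the circles $\chi(z,b\D)$ along $bM$. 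That the classical disk solution lifts to the constrained target $\Agot_*$ rests on $\Agot_*$ being an Oka manifold: it is homogeneous under a complex Lie group acting by rotations and dilations and so carries dominating sprays, which is exactly the flexibility needed to push $\C^3$-valued solutions back onto the quadric.

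I would then globalize and correct the periods at once. Using Mergelyan/Runge approximation for $\Agot_*$-valued maps---again available because $\Agot_*$ is Oka---glue the local modification on $\Omega$ to $\phi$ on $\overline M\setminus\Omega$, obtaining a global $\Agot_*$-valued form $\phi'$ that agrees with $\phi$ to order $d$ at each point of $\Lambda$ and is uniformly close to $\phi$ off $\Omega$. Since approximation spoils the exact periods, I would run this not on $\phi$ alone but on a \emph{period-dominating spray}: before approximating, embed $\phi$ in a holomorphic family $\phi_w$, $w$ near $0$ in $\C^N$, of $\Agot_*$-valued forms with $\phi_0=\phi$, built from tangential vector fields on $\Agot_*$ supported away from the arcs and vanishing to order $d$ at $\Lambda$, so that the complex period map $w\mapsto\big(\oint_{\gamma_j}\phi_w\big)_{j=1}^\ell$ has surjective differential at $w=0$. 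Performing the Riemann--Hilbert and approximation steps on the whole spray and then applying the implicit function theorem selects a $w$ close to $0$ for which $\tilde\phi:=\phi'_w$ has exactly the original periods; setting $\tilde u(p)=u(p_0)+2\Re\int_{p_0}^p\tilde\phi$ then gives a well-defined conformal minimal immersion with $\Flux(\tilde u)=\Flux(u)$ and the prescribed jet at $\Lambda$, and the three distance estimates for $\tilde u$ descend from those for the local derivative solution once $\Omega$ and the errors are taken small enough.

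The main obstacle is to carry out the Riemann--Hilbert modification without ever leaving the null quadric: conformality and minimality cannot be imposed after the fact, so the local disk solution, the gluing, and the period correction must all take place among $\Agot_*$-valued forms, which is precisely why Oka theory and (period-)dominating sprays are indispensable here. A secondary difficulty is the simultaneous bookkeeping---choosing $\Omega\subset U$ thin enough and the approximation errors small enough that the estimate off $\Omega$, the boundary tracking of $\chi(z,b\D)$, and the filling estimate $\chi(\rho(z),\overline\D)$ on $\Omega$ all hold together, while the interpolation at $\Lambda$ and the periods are preserved.
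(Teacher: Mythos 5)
A preliminary remark on the comparison itself: this paper contains no proof of Theorem \ref{th:RH}; the statement is quoted verbatim from \cite[Theorem 6.4.1]{AlarconForstnericLopez2021Book}, so your proposal has to be measured against the proof in that book and in the underlying papers \cite{AlarconForstneric2015MA,AlarconDrinovecForstnericLopez2015PLMS,AlarconDrinovecForstnericLopez2019TAMS}. At the level of architecture, your plan is in fact the one used there: pass to the $(1,0)$-derivative $\phi=\di u$ with values in the punctured null quadric $\Agot_*$, encode exactness and flux in the periods, localize the boundary data to bump neighborhoods of the arcs, glue by the Cartan-pair/spray method (this is where the Oka property of $\Agot_*$ genuinely enters), and restore the periods with a period-dominating spray and the implicit function theorem. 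One small bookkeeping omission: to get $\tilde u-u$ vanishing to order $d$ at $p\in\Lambda$ you need not only jet agreement of the forms but also value matching, i.e.\ control of $\int_{p_0}^{p}\tilde\phi-\int_{p_0}^{p}\phi$, which must be folded into the same period map; this is routine.

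The genuine gap is the core local step, which you acknowledge as ``the main obstacle'' but then dispatch with an argument that does not exist. You claim the classical $\C^3$-valued Riemann--Hilbert solution can be ``pushed back onto the quadric'' because $\Agot_*$ is Oka and carries dominating sprays. The Oka property yields Runge--Mergelyan approximation, jet interpolation, and gluing for maps from Stein sources into $\Agot_*$ --- all \emph{interior} statements --- and says nothing about boundary-value problems. Worse, the two requirements in this theorem live at different levels: the constraint (values in $\Agot_*$) is on the \emph{derivative}, while the Riemann--Hilbert conditions (closeness to $\chi(z,b\D)$ and to $\chi(\rho(z),\overline\D)$) are on the \emph{integral}; the classical Riemann--Hilbert solution controls its own boundary values but gives no control of its derivative, so there is nothing near $\Agot_*$ to project or deform, and no spray argument recovers the boundary estimates afterwards. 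This tension is precisely what makes the theorem nontrivial, and in the actual proofs it is resolved by hands-on constructions adapted to the geometry of the null quadric: an explicit solution for the disc with special (flat) attached discs, extended to arbitrary conformal minimal attached discs --- this extension being the main technical novelty of \cite{AlarconDrinovecForstnericLopez2019TAMS} (see also \cite[Chapter 6]{AlarconForstnericLopez2021Book}). Only once that local lemma is available do the Oka-theoretic gluing and the period-dominating sprays enter, exactly as in your third paragraph. As written, your second paragraph replaces the heart of the proof with an appeal to a nonexistent ``Oka principle for boundary-value problems,'' so the proposal is not a proof.
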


Theorem \ref{th:RH}, combined with the aforementioned method for exposing boundary points of a complex curve by Forstneri\v c and Wold \cite{ForstnericWold2009} and the Mergelyan theorem for conformal minimal surfaces (see \cite[Theorem 3.6.1]{AlarconForstnericLopez2021Book}), has led to much more precise versions of Theorem \ref{th:cCY} which include approximation, jet-interpolation, hitting, prescription of flux, and control on the asymptotic behavior of the examples. We refer the interested readers to the mentioned sources as well as to \cite[Chapters 7 and 8]{AlarconForstnericLopez2021Book}, where analogous results for minimal surfaces in $\R^n$ for arbitrary $n\ge 3$ have been also established. In particular, concerning the possible {\em asymptotic behaviors} of the examples it is known that: 
\begin{enumerate}[(A)]
\item If $M=R\setminus\bigcup_iD_i$ is as in  (i) or (ii) in Theorem \ref{th:cCY}, then there is a continuous map $u:\overline M=M\cup bM\to\R^3$ such that the restriction $u|_M:M\to\R^3$ to $M$ is a complete conformal minimal immersion and the restriction $u|_{bM}:bM\to\R^3$ to the boundary $bM=\bigcup_ibD_i$ is an injective map, hence a topological embedding \cite{AlarconDrinovecForstnericLopez2015PLMS,AlarconForstneric2021RMI}. In particular, $u(bM)=\bigcup_iu(bD_i)$ is the disjoint union of countably many Jordan curves (finitely many if $M$ is a bordered Riemann surface). This generalises and uses ideas from the proof of a previous partial result by Mart\'in and Nadirashvili for the disc $M=\d$ \cite{MartinNadirashvili2007ARMA}. 

Likewise, if $M=R\setminus (D\cup E)$ is as in (iv) in Theorem \ref{th:cCY}, then there is a continuous map $u:\overline M\to\R^3$ such that $u|_M$ is a complete conformal minimal immersion and $u|_{bD}:bD=\bigcup_{i=1}^\infty bD_i\to\R^3$ is a topological embedding \cite{AlarconForstneric2021RMI}.

\smallskip

\item If $M=\overline M\setminus bM$ is a bordered Riemann surface, so as in (i) in Theorem \ref{th:cCY}, and $\Omega\subset \R^3$ is a minimally convex domain, then there is a complete {\em proper} conformal minimal immersion $u:M\to \Omega$; if the domain $\Omega$ is bounded, smoothly bounded, and strongly minimally convex, then $u$ can be chosen to be continuous on $\overline M=M\cup bM$ and hence to satisfy $u(bM)\subset b\Omega$  \cite{AlarconDrinovecForstnericLopez2019TAMS}. A domain in $\R^3$ is {\em (strongly) minimally convex} if it is (strongly) $2$-convex in the sense of Harvey and Lawson \cite[Definition 3.3]{HarveyLawson2013IUMJ}; see Definition \ref{def:MC}. 
We refer to \cite{HarveyLawson2013IUMJ}, \cite{AlarconDrinovecForstnericLopez2019TAMS}, and \cite[Chapter 8]{AlarconForstnericLopez2021Book} for a detailed discussion of such domains.

\smallskip

\item If $M$ is a bordered Riemann surface and $\Omega\subset \R^3$ is a domain, then there is a complete conformal minimal immersion $u:M\to \Omega$ with dense image: $\overline{u(M)}=\overline \Omega$; see Alarc\'on and Castro-Infantes \cite{AlarconCastroInfantes2018GT}. Note that no restriction at all is imposed on the domain $\Omega$.                        
\end{enumerate}

According to (A) and (B), there are complete bounded minimal surfaces in $\R^3$ with the complex structure of any given bordered Riemann surface and a nice asymptotic behavior. In view of (C), there are also such surfaces whose asymptotic behavior is rather wild. 
Less is known about the possible asymptotic behaviors of complete bounded minimal surfaces with the complex structure of a circle domain in a compact Riemann surface with countably infinitely many disc ends (as the domain $M$ in Theorem \ref{th:cCY} (ii)) or of those with the complex structure of the complement of a Cantor set in a compact Riemann surface (as $M=R\setminus C$ in Theorem \ref{th:cCY} (iii)). In particular, focusing on (B), the following questions remain open (even in the test case when the domain $\Omega$ is the ball $\b\subset\R^3$).
%
%
\begin{problem}\label{pr:}
Let $R$ be a compact Riemann surface and  $\Omega\subset \R^3$ be a minimally convex domain.
\begin{enumerate}[\rm (a)]
\item Let $M=R\setminus\bigcup_{i=1}^\infty D_i$ be a domain in $R$ as in Theorem \ref{th:cCY} (ii). Does there exist a complete proper conformal minimal immersion $M\to\Omega$ ?

\smallskip

\item Does there exist a Cantor set $C$ in $R$ whose complement admits a complete proper conformal minimal immersion $R\setminus C\to\Omega$ ?
\end{enumerate}
\end{problem}

In Section \ref{sec:th} we shall settle Problem \ref{pr:} (b) in the positive, thereby providing a new sampling of the type of constructions regarding the conformal Calabi-Yau problem that can be carried out by using the Riemann-Hilbert method. Our proof combines the arguments from \cite{AlarconDrinovecForstnericLopez2019TAMS} and \cite{Forstneric2022RMI}, and it is similar to that in the recent paper \cite{Castro-InfantesHidalgo2024} by Castro-Infantes and Hidalgo where the existence of properly immersed constant mean curvature one (CMC-1) surfaces in hyperbolic space and of almost proper CMC-1 faces in de Sitter space with the complex structure of the complement of a Cantor set in an arbitrary compact Riemann surface is established. Our result is also related to that by Alarc\'on and Forstneri\v c in \cite[Theorem 1.5]{AlarconForstneric2024Annali} to the effect that on every compact Riemann surface there is a Cantor set whose complement admits a proper holomorphic embedding into the affine plane $\C^2$; see also the paper by Di Salvo and Wold \cite{DiSalvoWold2022}.

We expect that Problem \ref{pr:} (a) admits an affirmative answer as well. It is likely not an easy question, though.


\section{Complete minimal surfaces with Cantor ends\\in minimally convex domains}\label{sec:th}

\noindent
An upper semicontinuous function $\phi:\Omega\to\R\cup\{-\infty\}$ on a domain $\Omega\subset\R^3$ is {\em minimal plurisubharmonic}, or {\em $2$-plurisubharmonic}, if the restriction $\phi|_{L\cap \Omega}$ of $\phi$ to any affine plane $L\subset\R^3$ is subharmonic. If a function $\phi:\Omega\to\R$ is of class $\Cscr^2$, then the {\em Hessian} of $\phi$ at a point $x=(x_1,x_2,x_3)\in\Omega$ is the quadratic form ${\rm Hess}_\phi(x)={\rm Hess}_\phi(x,\cdot)$ on the tangent space $T_x\R^3\cong\R^3$ given by
\[
	{\rm Hess}_\phi(x,\xi)=\sum_{j,k=1}^3\frac{\partial^2\phi}{\partial x_j\partial x_k}(x)\xi_j\xi_k,\quad \xi=(\xi_1,\xi_2,\xi_3)\in\R^3.
\]
A function $\phi$ of class $\Cscr^2(\Omega)$ is minimal plurisubharmonic if and only if $\lambda_1(x)+\lambda_2(x)\ge 0$ for all $x\in\Omega$, where $\lambda_1(x)\le \lambda_2(x)$ are the eigenvalues of ${\rm Hess}_\phi(x)$; and this happens if and only if $\phi|_S$ is subharmonic for every minimal surface $S\subset\Omega$ (see \cite[Proposition 2.3 and Theorem 2.13]{HarveyLawson2013IUMJ} or \cite[Proposition 2.2]{AlarconDrinovecForstnericLopez2019TAMS}). A function $\phi\in\Cscr^2(\Omega)$ is {\em minimal strongly plurisubharmonic}, or {\em strongly $2$-plurisubharmonic}, if $\lambda_1(x)+\lambda_2(x)> 0$ for all $x\in\Omega$, and this happens if and only if $\phi|_S$ is strongly subharmonic for every minimal surface $S\subset\Omega$ (see \cite[Proposition 2.4]{AlarconDrinovecForstnericLopez2019TAMS}).

%
%
\begin{definition}\label{def:MC}
A domain $\Omega\subset\R^3$ is {\em minimally convex}, or {\em $2$-convex}, if it admits a smooth minimal strongly plurisubharmonic exhaustion function.\footnote{A continuous function $\phi:\Omega\to\R$ that is bounded from below is an {\em exhaustion function} on $\Omega$ if it is a proper map; i.e, the closed sublevel set $\Omega_c=\{x\in\Omega\colon \phi(x)\le c\}$ is compact for all $c>0$.}

A bounded domain $\Omega\subset\R^3$ with boundary of class $\Cscr^2$ is {\em strongly minimally convex}, or {\em strongly $2$-convex}, if the principal curvatures $\kappa_1\le \kappa_2$ of $b\Omega$ from the interior side satisfy $\kappa_1(x)+\kappa_2(x)>0$ for all  $x\in b\Omega$.
\end{definition}

Every convex domain in $\R^3$ is minimally convex, but there are minimally convex domains without any convex boundary points. Moreover, a smoothly bounded domain $\Omega$ in $\R^3$ is minimally convex if and only if it is {\em mean-convex}, meaning that $\kappa_1(x)+\kappa_2(x)\ge 0$ for all  $x\in b\Omega$, where $\kappa_1\le \kappa_2$ are the principal curvatures  of $b\Omega$ from the interior side; but minimally convex domains need not be smoothly bounded. We refer to \cite{HarveyLawson2013IUMJ}, \cite{AlarconDrinovecForstnericLopez2019TAMS}, and \cite[Chapter 8]{AlarconForstnericLopez2021Book} for a detailed discussion about minimally convex and strongly minimally convex domains in $\R^3$ and, more generally, about $p$-convex and strongly $p$-convex domains in $\R^n$ for $n\ge 3$ and $p\in\{1,\ldots,n\}$.

Here is the main new result in this paper.
%
%
\begin{theorem}\label{th:}
Let $R$ be a compact Riemann surface, $\Omega\subset\R^3$ be a minimally convex domain, and $D\subset R$ be a smoothly bounded closed disc,  and  assume that we are given a conformal minimal immersion $u:R\setminus \mathring D\to\Omega$. 
Then, for any $\epsilon>0$ and any closed discrete subset $A\subset \Omega$ there are a Cantor set $C\subset \mathring D$ and a complete proper conformal minimal immersion $\tilde u:R\setminus C\to \Omega$ satistying the following conditions:
\begin{enumerate}[\rm (i)]
\item $|\tilde u(p)-u(p)|<\epsilon$ for all $p\in R\setminus \mathring D$.

\smallskip

\item $A\subset\tilde u(D\setminus C)$. 

\smallskip

\item The limit set of $\tilde u$ equals the boundary $b\Omega$ of $\Omega$.
\end{enumerate}
\end{theorem}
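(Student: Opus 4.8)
The strategy is to run a Nadirashvili-type recursive construction, but with the Riemann-Hilbert method (Theorem~\ref{th:RH}) replacing Runge's theorem, and with two bookkeeping devices layered on top: one to carve out a Cantor set $C$ in $\mathring D$, and one to simultaneously force properness into $\Omega$, completeness of the metric, the hitting condition (ii), and the limit-set condition (iii). Concretely, I would build a nested sequence of compact bordered Riemann surfaces
\[
	M_1\supset M_2\supset\cdots,\qquad M_n=R\setminus\bigcup_i \Delta_{n,i},
\]
where at stage $n$ the disc $D$ has been replaced by a finite disjoint union of $2^n$ smaller smoothly bounded discs $\Delta_{n,i}$ arranged so that $\bigcap_n M_n=R\setminus C$ with $C$ a Cantor set. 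On each $M_n$ I would produce a conformal minimal immersion $u_n:\overline{M_n}\to\Omega$ together with a smooth minimal strongly plurisubharmonic exhaustion $\phi$ of $\Omega$ (whose existence is Definition~\ref{def:MC}), and I would track the sublevel sets $\Omega_{c_n}=\{\phi\le c_n\}$ with $c_n\nearrow\sup_\Omega\phi$.

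**The inductive step.**
The heart of the matter is passing from $u_n$ to $u_{n+1}$. First I would use the Forstneri\v c--Wold technique of exposing boundary points to push finitely many boundary arcs of $M_n$ outward so that, after a small conformal reparametrization, the surface is ready to receive a Riemann-Hilbert deformation concentrated near $bM_n$. Then I would apply Theorem~\ref{th:RH} with a spray $\alpha(z,\cdot)$ chosen so that the induced boundary displacement is (approximately) orthogonal to the position vector, exactly as in Nadirashvili's Pythagoras argument, so that the intrinsic boundary distance $\dist_{u_{n+1}}(p_0,bM_{n+1})$ grows by a definite amount while the image stays inside $\Omega$; properness is enforced by arranging that $u_{n+1}(\overline{M_n}\setminus\overline{M_{n+1}})\subset\Omega\setminus\Omega_{c_n}$, using that $\phi$ increases along directions transverse to its level sets and that $\phi|_S$ is strongly subharmonic on minimal surfaces, hence obeys a maximum principle. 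The hitting condition~(ii) is handled by interleaving: since $A$ is closed and discrete I would, at stage $n$, use the jet-interpolation clause ``$\tilde u-u$ vanishes to order $d$'' of Theorem~\ref{th:RH} (or a hitting lemma derived from it) to route the surface through the $n$-th point of $A$ while it still lies in $D\setminus C$. Each modification is made with error $<\epsilon_n$ on $\overline{M_{n+1}}$, with $\sum\epsilon_n$ small and $\epsilon_n\to0$ fast, and preserving $\mathrm{Flux}$; the approximation clause of Theorem~\ref{th:RH} guarantees $u_{n+1}\approx u_n$ away from the thin collar, so the sequence converges.

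**Passage to the limit and the main obstacle.**
The limit $\tilde u=\lim_n u_n$ exists on $R\setminus C$ provided the $\epsilon_n$ decay fast enough, and is a conformal minimal immersion because the $C^1$ convergence is uniform on compact subsets of $R\setminus C$ (the successive collars recede into $C$). Completeness follows from $\dist_{\tilde u}(p_0,\cdot)\to\infty$ toward every end, properness from $u_n(\partial\text{-collar})$ exiting the $\Omega_{c_n}$, condition~(i) from $\sum\epsilon_n<\epsilon$ on $R\setminus\mathring D$, and condition~(iii) from arranging the boundary pieces to cluster on \emph{all} of $b\Omega$ (for instance by prescribing, via the Riemann-Hilbert data $\chi$, that successive collars approach a countable dense subset of $b\Omega$). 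The main obstacle I anticipate is the simultaneous bookkeeping: the Cantor-carving forces the domain's topology to change at every step, so the collar on which Theorem~\ref{th:RH} acts must be chosen compatibly with the splitting of each disc into two, while keeping the exposed-point deformations, the hitting interpolation, and the ``stay inside $\Omega$ / exit $\Omega_{c_n}$'' constraints mutually consistent and with errors that genuinely sum to something small. Reconciling the geometry of minimal convexity (which, unlike ordinary convexity, only controls $\kappa_1+\kappa_2$ and so permits concave boundary directions) with the requirement that each Pythagoras push keep the surface inside $\Omega$ is precisely where the arguments of \cite{AlarconDrinovecForstnericLopez2019TAMS} enter, and marrying that to the Cantor construction of \cite{Forstneric2022RMI} is the delicate synthesis the theorem demands.
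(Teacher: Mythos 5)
Your overall skeleton coincides with the paper's: a recursive construction that carves the Cantor set out of $\mathring D$ while running a Riemann--Hilbert-based deformation at each stage, measuring properness and completeness against a minimal strongly plurisubharmonic exhaustion $\phi$ of $\Omega$, and passing to a limit with rapidly decreasing errors. (The paper packages the entire inductive engine into Lemma~\ref{lem:}, a simplified form of \cite[Lemma 8.4.6]{AlarconForstnericLopez2021Book}, whose conclusions are phrased purely in terms of $\phi$ --- boundary pushed into $\{a'<\phi<b\}$, $\phi\circ\tilde u>\phi\circ u-\delta$ everywhere, intrinsic distance $>\mu$, interpolation on a finite set --- precisely so that one never has to make a ``push orthogonal to the position vector'' argument, which indeed does not keep the surface inside a general minimally convex domain; you acknowledge this and defer to \cite{AlarconDrinovecForstnericLopez2019TAMS}, which is acceptable.) However, there are two concrete gaps in your mechanisms.

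First, the hitting condition (ii) cannot be obtained from the jet-interpolation clause of Theorem~\ref{th:RH}: that clause only says $\tilde u-u$ vanishes to high order at prescribed points, i.e.\ it \emph{preserves} values the surface already takes; it cannot make the surface pass through new points of $A$. The paper's device is different: at each stage, before removing the dividing segments of the Cantor construction, the current immersion is extended to a \emph{generalized} conformal minimal immersion over those very segments (the sets $E_0$, etc.), with the image of the segments prescribed to cover $A\cap\mathring L_j$; the Mergelyan theorem \emph{with interpolation} \cite[Theorem 3.6.1]{AlarconForstnericLopez2021Book} then upgrades this to a genuine conformal minimal immersion on a neighborhood, and the finitely many hit values are preserved at all later steps by the interpolation clause of Lemma~\ref{lem:}. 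So the points of $A$ are hit exactly on the material removed from $D$ to form $C$, which is what makes $A\subset\tilde u(D\setminus C)$ come out. Your parenthetical ``or a hitting lemma derived from it'' is where this entire mechanism lives, and it is not derivable from Theorem~\ref{th:RH} alone.

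Second, for the limit-set condition (iii) you propose to prescribe, via the Riemann--Hilbert data $\chi$, that successive collars cluster on a dense subset of $b\Omega$. This is a substantially stronger requirement than properness: the collar images at stage $j$ lie in the shell $\mathring L_{j+1}\setminus L_j$ but there is no reason they spread over it, and forcing them to approach prescribed boundary points while simultaneously respecting the completeness, approximation, and interpolation constraints is a construction you have not supplied. The paper avoids this entirely with a one-line trick: enlarge $A$ at the outset so that $\overline A=A\cup b\Omega$. Then (iii) is a free consequence of (ii) together with properness --- any $x\in b\Omega$ is a limit of points of $A$, whose $\tilde u$-preimages must diverge in $R\setminus C$ since $\tilde u$ is proper into $\Omega$, so $x$ lies in the limit set; conversely properness puts the limit set inside $b\Omega$. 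Adopting this reduction would also simplify your bookkeeping, since conditions (ii) and (iii) then collapse into a single hitting requirement.
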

Recall that the limit set of an immersed surface $u:M\to\R^3$ is the set of points $x\in \R^3$ such that there is a divergent sequence $\{p_j\}_{j\in \N}$ in $M$ with $x=\lim_{j\to\infty} u(p_j)$. In particular, since the immersion $\tilde u:R\setminus C\to\Omega$ in the theorem is proper, its limit set equals $\overline{\tilde u(R\setminus C)}\setminus \tilde u(R\setminus C)\subset b\Omega$.

Theorem \ref{th:} is new even if we ask the given domain $\Omega$ to be convex; it is new even in the very special case when $\Omega$ is the unit ball $\b\subset\R^3$. The main tool in the proof of the theorem is the following simplified version of \cite[Lemma 8.4.6]{AlarconForstnericLopez2021Book}. Its proof relies on Theorem \ref{th:RH} together with the method for exposing boundary points of a complex curve in \cite{ForstnericWold2009} (see also \cite[Theorem 6.7.1]{AlarconForstnericLopez2021Book}) and the Mergelyan theorem for conformal minimal immersions in \cite[Theorem 3.6.1]{AlarconForstnericLopez2021Book}.
%
%
\begin{lemma}\label{lem:}
Let $\phi$ be a minimal strongly plurisubharmonic function on a domain $\Omega\subset\R^3$ and $a<b$ be numbers such that the set 
\[
	\Omega_{a,b}=\{x\in\Omega\colon a<\phi(x)<b\}
\]
is relatively compact in $\Omega$. Let $\overline M=M\cup bM$ be a compact bordered Riemann surface and $u:\overline M\to\Omega$ be a conformal minimal immersion of class $\Cscr^1(\overline M)$ such that $u(bM)\subset \Omega_{a,b}$. Also let $K\subset M$ be a compact set and fix a point $p_0\in K$. Then, for any finite set $\Lambda\subset M$ and any numbers $a<a'<b$, $\epsilon>0$, $\delta>0$, and $\mu>0$, there is a conformal minimal immersion $\tilde u:\overline M\to\Omega$ of class $\Cscr^1(\overline M)$ satisfying the following conditions.
\begin{enumerate}[A]
\item[$\bullet$] $a'<\phi(\tilde u(p))<b$ for all $p\in bM$.

\smallskip

\item[$\bullet$] $\phi(\tilde u(p))> \phi(u(p))-\delta$ for all $p\in\overline M$.

\smallskip

\item[$\bullet$] $|\tilde u(p)-u(p)|<\epsilon$ for all $p\in K$.

\smallskip

\item[$\bullet$] $\dist_{\tilde u}(p_0,bM)>\mu$.

\smallskip

\item[$\bullet$] $\tilde u(p)=u(p)$ for all $p\in\Lambda$.
\end{enumerate}
\end{lemma}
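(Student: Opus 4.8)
The plan is to obtain $\tilde u$ from $u$ by finitely many applications of the Riemann--Hilbert theorem (Theorem~\ref{th:RH}), organised so as to realise simultaneously the two competing requirements: driving the intrinsic distance $\dist_{\tilde u}(p_0,bM)$ past $\mu$ (completeness) and raising $\phi$ along $bM$ into the slab $a'<\phi<b$ (properness). The engine is a single Riemann--Hilbert step in which, near each boundary point $z\in bM$, one grafts onto $u$ a small conformal minimal disc $\alpha(z,\cdot)\colon\overline\D\to\R^3$ with $\alpha(z,0)=0$, so that the resulting boundary of $\tilde u$ hugs the curves $\xi\mapsto u(z)+\alpha(z,r(z)\xi)$, $\xi\in b\D$. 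The crucial point is that since $\alpha(z,\cdot)$ is a minimal immersion and $\phi$ is minimal plurisubharmonic, the composition $\phi\circ\chi(z,\cdot)$ is subharmonic on $\overline\D$ with value $\phi(u(z))$ at the centre, so its boundary values may be forced upward; and since $\phi$ is moreover strongly plurisubharmonic, the uniform eigenvalue bound $\lambda_1+\lambda_2>0$ of $\Hess_\phi$ on the compact region $\overline{\Omega_{a,b}}$ supplies the quantitative estimate underlying the whole scheme.

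Concretely, for one step I would first cover $bM$ by finitely many compact arcs $I_1,\dots,I_k$ (none a whole boundary component), set $I=\bigcup_iI_i$, and choose $r\colon bM\to[0,1]$ supported in the relative interior of $I$. Using the Mergelyan theorem for conformal minimal immersions I construct the family $\alpha(z,\cdot)$ so that its image stays in the region $\{\phi>\phi(u(z))-\delta'\}$ (to control the downward drift of $\phi$), so that the boundary circle $\alpha(z,b\D)$ is long and rapidly oscillating in directions essentially tangent to the level set $\{\phi=\phi(u(z))\}$ (to force a definite gain of intrinsic length, via the Pythagoras mechanism recalled in Section~\ref{sec:survey}, now read against the second fundamental form of the level sets of $\phi$), and so that $\phi$ is pushed up on that circle into $(a',b)$. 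Applying Theorem~\ref{th:RH} with these data, taking $\Lambda$ as the interpolation set with vanishing order $d\ge 1$ (so that $\tilde u=u$ on $\Lambda$) and the boundary neighbourhood $U$ of $I$ chosen disjoint from the interior compact set $K$, yields a conformal minimal immersion of class $\Cscr^1(\overline M)$ that agrees with $u$ off a small collar neighbourhood of $\supp(r)$ in $M$ (hence is within any prescribed tolerance of $u$ on $K$) and whose boundary has both larger intrinsic distance from $p_0$ and higher $\phi$-values than before.

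I would then iterate this step a finite number $N$ of times, producing $u=u_0,u_1,\dots,u_N=\tilde u$. The Nadirashvili balance governs the choices: at the $j$-th step the tangential push of size $s_j$ contributes an increment of order $s_j$ to $\dist_{u_j}(p_0,bM)$ but only of order $s_j^2$ (weighted by the level-set curvature, itself controlled by $\lambda_1+\lambda_2$) to the $\phi$-height of the boundary. Choosing $s_j\approx\mu/N$ makes $\sum_j s_j>\mu$ while $\sum_j s_j^2$ is as small as we wish, so the boundary is driven above $a'$ yet never reaches $b$, and the accumulated interior drop of $\phi$ stays below $\delta$; summing the geometrically small off-collar errors keeps $|\tilde u-u|<\epsilon$ on $K$ and preserves $\tilde u|_\Lambda=u|_\Lambda$. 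The exposing-of-boundary-points technique of Forstneri\v c and Wold is invoked to prepare the boundary arcs so that the grafted discs can be placed and the conformal type kept under control throughout the iteration.

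The main obstacle is precisely this simultaneous bookkeeping: one must gain the prescribed total intrinsic distance $\mu$ while never letting the boundary values of $\phi$ escape the slab $(a',b)$ nor letting $\phi$ drop by more than $\delta$ anywhere in the interior. The resolution is the quantitative strong minimal plurisubharmonicity of $\phi$ — the uniform positivity of $\lambda_1+\lambda_2$ on $\overline{\Omega_{a,b}}$ — which converts the classical Euclidean Pythagoras estimate into a genuine inequality in the curved, minimally convex setting and makes the linear-gain against quadratic-cost balance work. The remaining conditions (closeness on $K$, interpolation on $\Lambda$, and the class $\Cscr^1(\overline M)$) are then routine consequences of the corresponding clauses of Theorem~\ref{th:RH}.
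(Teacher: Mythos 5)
The paper does not prove this lemma; it quotes it as a simplified version of \cite[Lemma 8.4.6]{AlarconForstnericLopez2021Book}, whose proof indeed uses the tools you invoke (Theorem~\ref{th:RH}, Mergelyan approximation, exposing of boundary points), so your toolbox is the right one. The genuine gap is in your quantitative bookkeeping, and it is not a fixable slip: you claim that choosing $s_j\approx \mu/N$, so that $\sum_j s_j>\mu$ while $\sum_j s_j^2$ is \emph{as small as we wish}, simultaneously yields the distance gain and ``drives the boundary above $a'$''. It cannot. Since each attached disc $\alpha(z,\cdot)$ is harmonic with $\alpha(z,0)=0$, the mean value property forces the harmonic function $\xi\mapsto\nabla\phi(u(z))\cdot\alpha(z,r(z)\xi)$ to have average $0$, hence minimum $\le 0$, over the circle $|\xi|=1$; consequently the \emph{minimum} of $\phi$ over any grafted boundary circle $\chi(z,b\D)$ exceeds $\phi(u(z))$ by at most $Cs_j^2$, with $C$ controlled by $\Hess_\phi$ on a compact neighborhood of $\overline{\Omega_{a,b}}$. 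Theorem~\ref{th:RH} only places $\tilde u(z)$ near \emph{some} point of that circle, so the lift of $\min_{bM}\phi\circ\tilde u$ that a step can \emph{guarantee} is at most $Cs_j^2$, and your scheme guarantees a total rise of at most $C\sum_j s_j^2$, which you have made arbitrarily small --- while the boundary values of $\phi\circ u$ may be barely above $a$, so a fixed rise of about $a'-a$ is required. This forces $\sum_j s_j^2\ge (a'-a)/C$, a definite positive quantity, in direct contradiction with your choice. Your one-shot fallback inside the single-step description (discs whose boundary circles already lie in $\{a'<\phi<b\}$) is also unjustified: Mergelyan approximates maps you already possess, and exhibiting a $\Cscr^1$ family of conformal minimal discs through the points $u(z)$ (at level $\approx a$) with entire boundary curves in the slab and staying above $\{\phi>\phi(u(z))-\delta'\}$ is essentially the lemma itself in the case $\overline M=\overline\D$. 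Moreover, Theorem~\ref{th:RH} deforms $u$ only near $\supp(r)\subset I$, and $I$ is a union of arcs that cannot contain a full boundary component, so after one application the points of $bM\setminus\supp(r)$ keep their old $\phi$-values $<a'$ and are not lifted at all.

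The proof behind the citation (from \cite{AlarconDrinovecForstnericLopez2019TAMS}) resolves exactly this tension by decoupling the two requirements into Riemann--Hilbert steps with \emph{different} disc geometries. For the lift, the key geometric lemma of \cite{AlarconDrinovecForstnericLopez2019TAMS} provides, at each point $x$ of a compact set, a small conformal minimal disc through $x$, tangent to the level set of $\phi$ and osculating it to second order --- a flat tangential disc corrected by a quadratic (null) term chosen to kill the harmonic part of $\phi$ along the disc --- on which $\phi$ grows like $c|\xi|^2$ in \emph{all} directions; here $c>0$ comes from the trace of $\Hess_\phi$ on the tangent plane, i.e.\ precisely from $\lambda_1+\lambda_2>0$. (A plain tangential disc does not suffice: the restricted Hessian need not be definite.) Grafting these discs over finitely many arc families covering $bM$, and iterating a \emph{definite} number of sweeps $\approx(a''-a)/(cs^2)$, raises all of $u(bM)$ above an intermediate level $a''$ with $a'<a''<b$, with interior loss only the approximation errors. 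Only then do the completeness steps run, with flat tangential discs (whose $\phi$-loss is automatically $O(s_j^2)$ since the linear term vanishes) and your Nadirashvili balance, now with $\sum_j s_j>\mu$ and $C\sum_j s_j^2<\min(\delta/2,\,a''-a')$, so the boundary never falls back below $a'$ and $\phi$ never drops by more than $\delta$. Finally, in each completeness step divergent paths can exit through $bM\setminus\supp(r)$, where no length is gained; the actual role of the exposing technique of \cite{ForstnericWold2009} is to handle exactly those paths (it stretches the image near the finitely many untreated boundary points into long thin tentacles), not the domain-preparation role you assign to it.
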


We also need to recall the following construction of a Cantor set in a  
domain $X_0\subset\C$. Here, we follow the exposition in \cite[Sect.\ 6]{AlarconForstneric2024Annali}. 
First, choose a smoothly bounded compact convex domain 
$\Delta_0\subset X_0$. Remove from $\Delta_0$ an open 
neighborhood $\Gamma_0$ of the vertical straight line segment dividing $\Delta_0$ into two convex sets of the same width in order to obtain two smoothly bounded compact convex subsets $\Delta_0^1$ 
and $\Delta_0^2$. Next, for $j=1,2$, remove from $\Delta_0^j$ 
an open neighborhood $\Gamma_0^j$ of the horizontal straight line segment 
dividing $\Delta_0^j$ in two convex subsets of the same height, in such a way that the two
components of $\Delta_0^j\setminus \Gamma_0^j$ are smoothly bounded 
compact convex domains. We thus obtain a compact set 
\begin{equation}\label{eq:X1}
	X_1=\Delta_0\setminus 
	(\Gamma_0\cup \Gamma_0^1\cup\Gamma_0^2)
	\subset X_0
\end{equation}
that is the union of four mutually disjoint, smoothly bounded compact convex  domains
$X_1^j$, $j=1,\ldots,4$.
In a second step, repeat the same process for each convex compact domain 
$X_1^j$, thereby getting four mutually disjoint smoothly 
bounded compact convex domains in its interior. Thus we have a compact set 
$X_2\subset\mathring X_1$ that is the union of sixteen smoothly bounded 
compact convex domains. Repeating this process in an inductive way, we get a decreasing sequence 
of smoothly bounded compact domains
\begin{equation}\label{eq:XXX}
	X_1\Supset X_2\Supset X_3 \Supset\cdots
\end{equation}
such that each $X_i$ is the union of $4^i$ mutually disjoint smoothly bounded 
compact convex domains. 
It turns out that the intersection
\begin{equation}\label{eq:Cantor}
	C = \bigcap_{i\ge 1} X_i
\end{equation}
is a Cantor set in $\C$ contained in $X_0$.
%
%
\begin{proof}[Proof of Theorem \ref{th:}]
Up to replacing $A$ by a larger closed discrete set in $\Omega$ if necessary, we assume without loss of generality that the closure $\overline A$ of $A$ in $\R^3$ contains $b\Omega$, so
\begin{equation}\label{eq:closureA}
	\overline A=A\cup b\Omega.
\end{equation}
Let $\phi:\Omega\to\R$ be a smooth positive minimal strongly plurisubharmonic Morse exhaustion function; such a function exists since $\Omega$ is minimally convex (see \cite[Remark 8.1.11]{AlarconForstnericLopez2021Book}). Let $0=r_0<r_1<r_2<\cdots$ be a divergent sequence of noncritical values of $\phi$ such that $\phi(u(p))<r_1$ for all $p\in R\setminus\mathring D$ and $A\cap\phi^{-1}(r_j)=\varnothing$ for all $j\in\N$. Denote by $L_j$ the connected component of $\Omega_{r_j}=\{x\in\Omega\colon \phi(x)\le r_j\}$ containing $\phi(u(R\setminus\mathring D))$, $j\in\N$. We thus have that
\begin{equation}\label{eq:L}
	\phi(u(R\setminus\mathring D)) \Subset L_1\Subset L_2\Subset\cdots\subset \bigcup_{j\in\N} L_j=\Omega
\end{equation}
is an exhaustion of $\Omega$ by connected smoothly bounded compact domains such that $A\cap bL_j=\varnothing$ for all $j\in\N$. Since $D$ is a smoothly bounded closed disc, it is a smoothly bounded compact convex domain in a holomorphic coordinate chart on $R$. Call $X_0=\mathring D$, $K_0=R\setminus X_0$,  $u_0=u$, $\epsilon_0=\epsilon$, and $L_0=\{x\in\Omega\colon \phi(x)\le r_0=0\}=\varnothing$, and let $\Delta_0\subset X_0$ be a smoothly bounded compact convex domain so large that $u_0$ extends to a conformal minimal immersion $u_0:R\setminus \mathring \Delta_0\to \Omega$ with the range in $\mathring L_1$. Extend $u_0$ to a generalized conformal minimal immersion $u_0:(R\setminus \mathring \Delta_0)\cup E_0\to \mathring L_1$ (see \cite[Definition 3.1.2]{AlarconForstnericLopez2021Book}) with $A\cap \mathring L_1\subset u_0(E_0)$, where $E_0$ is the vertical straight line segment dividing $\Delta_0$ in two convex subsets of the same width. By the Mergelyan theorem with interpolation for conformal minimal immersions (see \cite[Theorem 3.6.1]{AlarconForstnericLopez2021Book}), given a number $0<\epsilon_1<\epsilon_0/2$ there is a conformal minimal immersion $u_0':(R\setminus \mathring \Delta_0)\cup \overline\Gamma_0\to \mathring L_1$ such that $|u_0'(p)-u_0(p)|<\epsilon_1/3$ for all $p\in (R\setminus \mathring \Delta_0)\cup E_0$ and $A\cap \mathring L_1\subset u_0'(E_0)$, where $\Gamma_0$ is a neighborhood of $E_0$ as in the construction of a Cantor set above. Next, repeat the procedure simultaneously in the two connected components $\Delta_0^1$ and $\Delta_0^2$ of $\Delta_0\setminus \Gamma_0$ to find a compact set $X_1$ as in \eqref{eq:X1} and a conformal minimal immersion $u_0'':K_1=R\setminus \mathring X_1\to\mathring L_1$ such that $|u_0''(p)-u_0'(p)|<\epsilon_1/3$ for all $p\in (R\setminus \mathring \Delta_0)\cup \overline\Gamma_0$ and $A\cap \mathring L_1\subset u_0''(E_0)$. Finally, fix a point $p_0\in \mathring K_0$ and apply Lemma \ref{lem:} to obtain a conformal minimal immersion $u_1:K_1\to \mathring L_2$ such that $|u_1(p)-u_0''(p)|<\epsilon_1/3$ for all $p\in K_0$, $A\cap \mathring L_1\subset u_1(E_0)$, $u_1(bK_1)\subset \mathring L_2\setminus L_1$, and $\dist_{u_1}(p_0,bK_1)>1$. In particular, the following hold.
\begin{enumerate}[A]
\item[$\bullet$] $u_1:K_1=R\setminus \mathring X_1\to \Omega$ is a conformal minimal immersion, where $X_1\subset X_0$ is the union of $4$ mutually disjoint smoothly bounded compact convex domains.

\smallskip
\item[$\bullet$] $|u_1(p)-u_0(p)|<\epsilon_1$ for all $p\in K_0$.

\smallskip
\item[$\bullet$] $u_1(bK_1)\subset\mathring L_2\setminus L_1$.

\smallskip
\item[$\bullet$] $u_1(K_1\setminus \mathring K_0)\cap L_0=\varnothing$.

\smallskip
\item[$\bullet$] $\dist_{u_1}(p_0,bK_1)>1$. 

\smallskip
\item[$\bullet$] $A\cap \mathring L_1\subset u_1(\mathring K_1\setminus K_0)$.

\smallskip
\item[$\bullet$] $0<\epsilon_1<\epsilon_0/2$.
\end{enumerate}

Repeating this procedure based on Lemma \ref{lem:} and the Mergelyan theorem with interpolation for conformal minimal immersions \cite[Theorem 3.6.1]{AlarconForstnericLopez2021Book} in an inductive way, following the construction of a Cantor set $C$ in $X_0$ explained above (see \eqref{eq:XXX} and \eqref{eq:Cantor}), we may construct sequences $K_j$, $u_j$, $\epsilon_j$, $j\ge 2$, such that
\begin{enumerate}[\rm (a)]
\item $u_j:K_j=R\setminus \mathring X_j\to \Omega$ is a conformal minimal immersion, where $X_j\Subset X_{j-1}$ is the union of $4^j$ mutually disjoint smoothly bounded compact convex domains.

\smallskip
\item $|u_j(p)-u_{j-1}(p)|<\epsilon_j$ for all $p\in K_{j-1}$.

\smallskip
\item $u_j(bK_j)\subset\mathring L_{j+1}\setminus L_j$.

\smallskip
\item $u_i(K_i\setminus \mathring K_{i-1})\cap L_{i-1}=\varnothing$ for all $i=1,\ldots,j$. 

\smallskip
\item $\dist_{u_j}(p_0,bK_i)>i$ for all $i=1,\ldots,j$. 

\smallskip
\item $A\cap \mathring L_i\subset u_j(\mathring K_i\setminus K_{i-1})$ for all $i=1,\ldots,j$. 

\smallskip
\item $0<\epsilon_j<\epsilon_{j-1}/2$.
\end{enumerate}
It turns out that $K_0\Subset K_1\Subset K_2\Subset\cdots$ and
\[
	C=\bigcup_{j\ge 0} X_j=R\setminus \bigcup_{j\ge 0} K_j
\]
is a Cantor set in $R$. Choosing the number $\epsilon_j>0$ sufficiently small at each step, conditions (a), (b), and (g) ensure that there is a limit map 
\[
	\tilde u=\lim_{j\to\infty} u_j:R\setminus C= \bigcup_{j\ge 0} K_j\to\Omega
\]
which is a conformal minimal immersion with $|\tilde u(p)-u(p)|<\epsilon$ for all $p\in K_0=R\setminus \mathring D$; recall that $u=u_0$. Further, \eqref{eq:L} and (d) ensure that $\tilde u$ is a proper map, while (e) guarantees that $\tilde u$ is complete, and \eqref{eq:L} and (f) that $A\subset \tilde u(D\setminus C)$. Finally, this inclusion, \eqref{eq:closureA}, and the properness of $\tilde u$ ensure that the limit set of $\tilde u$ equals the boundary $b\Omega$ of $\Omega$.
\end{proof}


\subsection*{Acknowledgements}
Research partially supported by the State Research Agency (AEI) via the grants no.\ PID2020-117868GB-I00  and PID2023-150727NB-I00, and the ``Maria de Maeztu'' Unit of Excellence IMAG, reference CEX2020-001105-M, funded by MCIN/AEI/10.13039/501100011033/, Spain.



\medskip
\noindent Antonio Alarc\'{o}n
\newline
\noindent Departamento de Geometr\'{\i}a y Topolog\'{\i}a e Instituto de Matem\'aticas (IMAG), Universidad de Granada, Campus de Fuentenueva s/n, E--18071 Granada, Spain.
\newline
\noindent  e-mail: {\tt alarcon@ugr.es}


\begin{thebibliography}{10}

\bibitem{AlarconCastroInfantes2018GT}
A.~Alarc{\'o}n and I.~Castro-Infantes.
\newblock Complete minimal surfaces densely lying in arbitrary domains of
  {$\mathbb {R}^n$}.
\newblock {\em Geom. Topol.}, 22(1):571--590, 2018.

\bibitem{AlarconDrinovecForstnericLopez2015PLMS}
A.~Alarc{\'o}n, B.~Drinovec~Drnov{\v{s}}ek, F.~Forstneri\v{c}, and F.~J.
  L{\'o}pez.
\newblock Every bordered {R}iemann surface is a complete conformal minimal
  surface bounded by {J}ordan curves.
\newblock {\em Proc. Lond. Math. Soc. (3)}, 111(4):851--886, 2015.

\bibitem{AlarconDrinovecForstnericLopez2019TAMS}
A.~Alarc{\'o}n, B.~Drinovec~Drnov\v{s}ek, F.~Forstneri\v{c}, and F.~J.
  L{\'o}pez.
\newblock Minimal surfaces in minimally convex domains.
\newblock {\em Trans. Amer. Math. Soc.}, 371(3):1735--1770, 2019.

\bibitem{AlarconFernandezLopez2012CMH}
A.~Alarc{\'o}n, I.~Fern{\'a}ndez, and F.~J. L{\'o}pez.
\newblock Complete minimal surfaces and harmonic functions.
\newblock {\em Comment. Math. Helv.}, 87(4):891--904, 2012.

\bibitem{AlarconForstneric2015MA}
A.~Alarc{\'o}n and F.~Forstneri\v{c}.
\newblock The {C}alabi--{Y}au problem, null curves, and {B}ryant surfaces.
\newblock {\em Math. Ann.}, 363(3-4):913--951, 2015.

\bibitem{AlarconForstneric2015Abel}
A.~Alarc\'{o}n and F.~Forstneri\v{c}.
\newblock Null holomorphic curves in {$\Bbb C^3$} and applications to the
  conformal {C}alabi-{Y}au problem.
\newblock In {\em Complex geometry and dynamics}, volume~10 of {\em Abel
  Symp.}, pages 101--121. Springer, Cham, 2015.

\bibitem{AlarconForstneric2021RMI}
A.~Alarc\'{o}n and F.~Forstneri\v{c}.
\newblock The {C}alabi-{Y}au problem for {R}iemann surfaces with finite genus
  and countably many ends.
\newblock {\em Rev. Mat. Iberoam.}, 37(4):1399--1412, 2021.

\bibitem{AlarconForstneric2024Annali}
A.~Alarc\'{o}n and F.~Forstneri\v{c}.
\newblock Embedded complex curves in the affine plane.
\newblock {\em Ann. Mat. Pura Appl. (4)}, 203(4):1673--1701, 2024.

\bibitem{AlarconForstnericLopez2021Book}
A.~Alarc\'{o}n, F.~Forstneri\v{c}, and F.~J. L\'{o}pez.
\newblock {\em Minimal surfaces from a complex analytic viewpoint}.
\newblock Springer Monographs in Mathematics. Springer, Cham, [2021] \copyright
  2021.

\bibitem{AssimosBekesiGentile}
R.~Assimos, B.~M. B\'ek\'esi, and G.~Gentile.
\newblock {Remarks on the generalised Calabi-Yau problem in higher
  codimension}.
\newblock {\em arXiv preprint (2024)}.
\newblock \url{https://arxiv.org/abs/2404.08781}.

\bibitem{Castro-InfantesHidalgo2024}
I.~Castro-Infantes and J.~Hidalgo.
\newblock {CMC}-1 surfaces in hyperbolic and de {S}itter spaces with {C}antor
  ends.
\newblock {\em Mediterr. J. Math.}, 21(5):167, 2024.

\bibitem{Chern1966BAMS}
S.~S. Chern.
\newblock The geometry of {$G$}-structures.
\newblock {\em Bull. Amer. Math. Soc.}, 72:167--219, 1966.

\bibitem{ColdingMinicozzi2008AM}
T.~H. Colding and W.~P. Minicozzi, II.
\newblock The {C}alabi--{Y}au conjectures for embedded surfaces.
\newblock {\em Ann. of Math. (2)}, 167(1):211--243, 2008.

\bibitem{CollinRosenberg1999BSC}
P.~Collin and H.~Rosenberg.
\newblock Notes sur la d\'{e}monstration de {N}. {N}adirashvili des conjectures
  de {H}adamard et {C}alabi--{Y}au.
\newblock {\em Bull. Sci. Math.}, 123(7):563--575, 1999.

\bibitem{Conway1973}
J.~B. Conway.
\newblock {\em Functions of one complex variable}, volume~11 of {\em Graduate
  Texts in Mathematics}.
\newblock Springer-Verlag, New York-Heidelberg, 1973.

\bibitem{DiSalvoWold2022}
G.~D. Di~Salvo and E.~F. Wold.
\newblock Proper holomorphic embeddings of complements of large {C}antor sets
  in {$\Bbb C^2$}.
\newblock {\em Ark. Mat.}, 60(2):323--333, 2022.

\bibitem{DrinovecForstneric2012IUMJ}
B.~Drinovec~Drnov{\v{s}}ek and F.~Forstneri\v{c}.
\newblock The {P}oletsky--{R}osay theorem on singular complex spaces.
\newblock {\em Indiana Univ. Math. J.}, 61(4):1407--1423, 2012.

\bibitem{FerrerMartinMeeks2012AM}
L.~Ferrer, F.~Mart\'{i}n, and W.~H. Meeks, III.
\newblock Existence of proper minimal surfaces of arbitrary topological type.
\newblock {\em Adv. Math.}, 231(1):378--413, 2012.

\bibitem{Forstneric2022RMI}
F.~Forstneri\v{c}.
\newblock The {C}alabi--{Y}au problem for minimal surfaces with {C}antor ends.
\newblock {\em Rev. Mat. Iberoam.}, 39(6):2067--2077, 2023.

\bibitem{ForstnericGlobevnik2001MRL}
F.~Forstneri\v{c} and J.~Globevnik.
\newblock Proper holomorphic discs in {$\mathbb C^2$}.
\newblock {\em Math. Res. Lett.}, 8(3):257--274, 2001.

\bibitem{ForstnericWold2009}
F.~Forstneri\v{c} and E.~F. Wold.
\newblock Bordered {R}iemann surfaces in {${\mathbb C}^2$}.
\newblock {\em J. Math. Pures Appl. (9)}, 91(1):100--114, 2009.

\bibitem{Ghomi2017}
M.~Ghomi.
\newblock {Open problems in geometry of curves and surfaces}.
\newblock Available at \url{https://ghomi.math.gatech.edu/Papers/op.pdf}, 2017.

\bibitem{Hadamard1898}
J.~{Hadamard}.
\newblock {Les surfaces \`a courbures oppos\'ees et leurs lignes
  g\'eod\'esiques.}
\newblock {\em {Journ. de Math. (5)}}, 4:27--73, 1898.

\bibitem{HarveyLawson2013IUMJ}
F.~R. Harvey and H.~B. Lawson, Jr.
\newblock {$p$}-convexity, {$p$}-plurisubharmonicity and the {L}evi problem.
\newblock {\em Indiana Univ. Math. J.}, 62(1):149--169, 2013.

\bibitem{HeSchramm1993}
Z.-X. He and O.~Schramm.
\newblock Fixed points, {K}oebe uniformization and circle packings.
\newblock {\em Ann. of Math. (2)}, 137(2):369--406, 1993.

\bibitem{JorgeXavier1980AM}
L.~P. d.~M. Jorge and F.~Xavier.
\newblock A complete minimal surface in {${\bf R}^{3}$} between two parallel
  planes.
\newblock {\em Ann. of Math. (2)}, 112(1):203--206, 1980.

\bibitem{Calabi1965Conjecture}
S.~Kobayashi and J.~Eells~Jr.
\newblock {\em Proceedings of the {U}nited {S}tates-{J}apan {S}eminar in
  {D}ifferential {G}eometry, {K}yoto, {J}apan, 1965}.
\newblock Nippon Hyoronsha Co., Ltd., Tokyo, 1966.

\bibitem{Lawson1980}
H.~B. Lawson, Jr.
\newblock {\em Lectures on minimal submanifolds. {V}ol. {I}}, volume~9 of {\em
  Mathematics Lecture Series}.
\newblock Publish or Perish, Inc., Wilmington, Del., second edition, 1980.

\bibitem{LopezRos1991JDG}
F.~J. L{\'o}pez and A.~Ros.
\newblock On embedded complete minimal surfaces of genus zero.
\newblock {\em J. Differential Geom.}, 33(1):293--300, 1991.

\bibitem{MartinNadirashvili2007ARMA}
F.~Mart\'{i}n and N.~Nadirashvili.
\newblock A {J}ordan curve spanned by a complete minimal surface.
\newblock {\em Arch. Ration. Mech. Anal.}, 184(2):285--301, 2007.

\bibitem{MeeksPerez2012Survey}
W.~H. Meeks, III and J.~P{\'e}rez.
\newblock {\em A survey on classical minimal surface theory}, volume~60 of {\em
  University Lecture Series}.
\newblock Amer. Math. Soc., Providence, RI, 2012.

\bibitem{MeeksPerezRos2021Duke}
W.~H. Meeks, III, J.~P\'{e}rez, and A.~Ros.
\newblock The embedded {C}alabi-{Y}au conjecture for finite genus.
\newblock {\em Duke Math. J.}, 170(13):2891--2956, 2021.

\bibitem{Morales2003GAFA}
S.~Morales.
\newblock On the existence of a proper minimal surface in {$\mathbb{R}^3$} with
  a conformal type of disk.
\newblock {\em Geom. Funct. Anal.}, 13(6):1281--1301, 2003.

\bibitem{Nadirashvili1996IM}
N.~Nadirashvili.
\newblock Hadamard's and {C}alabi--{Y}au's conjectures on negatively curved and
  minimal surfaces.
\newblock {\em Invent. Math.}, 126(3):457--465, 1996.

\bibitem{Osserman1986}
R.~Osserman.
\newblock {\em A survey of minimal surfaces}.
\newblock Dover Publications, Inc., New York, second edition, 1986.

\bibitem{Rozendorn1961}
E.~R. Rozendorn.
\newblock The construction of a bounded, complete surface of nonpositive
  curvature.
\newblock {\em Uspehi Mat. Nauk}, 16(2(98)):149--156, 1961.

\bibitem{Rozendorn1992}
E.~R. Rozendorn.
\newblock Surfaces of negative curvature.
\newblock In {\em Geometry, {III}}, volume~48 of {\em Encyclopaedia Math.
  Sci.}, pages 87--178, 251--256. Springer, Berlin, 1992.

\bibitem{Stout2007}
E.~L. Stout.
\newblock {\em Polynomial convexity}, volume 261 of {\em Progress in
  Mathematics}.
\newblock Birkh\"auser Boston, Inc., Boston, MA, 2007.

\bibitem{Xavier1984}
F.~Xavier.
\newblock Convex hulls of complete minimal surfaces.
\newblock {\em Math. Ann.}, 269(2):179--182, 1984.

\bibitem{Yau1982}
S.-T. Yau.
\newblock Problem section.
\newblock In {\em Seminar on {D}ifferential {G}eometry}, volume 102 of {\em
  Ann. of Math. Stud.}, pages 669--706. Princeton Univ. Press, Princeton, N.J.,
  1982.

\bibitem{Yau2000AMS}
S.-T. Yau.
\newblock Review of geometry and analysis.
\newblock In {\em Mathematics: frontiers and perspectives}, pages 353--401.
  Amer. Math. Soc., Providence, RI, 2000.

\bibitem{Yau2000AJM}
S.-T. Yau.
\newblock Review of geometry and analysis.
\newblock {\em Asian J. Math.}, 4(1):235--278, 2000.
\newblock Kodaira's issue.

\end{thebibliography}
\end{document}